\newcounter{maincounter}[section]
\theoremstyle{plain}
\newtheorem{notation}[maincounter]{Notation}
\newtheorem*{notation*}{Notation}
\newtheorem*{question*}{Question}
\newtheorem*{theorem*}{Theorem}
\newtheorem{proposition}[maincounter]{Proposition}
\newtheorem{corollary}[maincounter]{Corollary}
\newtheorem{remark}[maincounter]{Remark}
\newtheorem*{fact*}{Fact}
\newtheorem*{TQT*}{Timmesfeld's Quadratic Theorem}
\newtheorem*{logarithm*}{Lie-ring analogue}
\theoremstyle{definition}
\newtheorem{definition}[maincounter]{Definition}
\theoremstyle{remark}
\newtheorem{claim}{Claim}[maincounter]
\newenvironment{proofclaim}
{\par\pushQED{\qed}
\normalfont \topsep6\p@\@plus6\p@\relax\trivlist
\item[\hskip\labelsep
\emph{Proof of Claim.}]
\ignorespaces

}
{\popQED\endtrivlist\@endpefalse}
\newcommand{\simple}{-}
\newcommand{\doubleleq}{\Leftarrow}
\newcommand{\doublegeq}{\Rightarrow}
\newcommand{\tripleleq}{\Lleftarrow}
\newcommand{\<}{\langle}
\renewcommand{\>}{\rangle}
\newcommand{\cl}{\mathrm{cl}}
\newcommand{\K}{\mathbb{K}}
\newcommand{\R}{\mathbb{R}}
\newcommand{\Z}{\mathbb{Z}}
\newcommand{\bG}{\mathbb{G}}
\newcommand{\bSL}{\mathbb{SL}}
\newcommand{\cG}{\mathcal{G}}
\newcommand{\cu}{y}
\newcommand{\cU}{\mathcal{U}}
\newcommand{\cT}{\mathcal{T}}
\newcommand{\fg}{\mathfrak{g}}
\newcommand{\fu}{\mathfrak{u}}
\newcommand{\ft}{\mathfrak{t}}
\newcommand{\Triv}{Z}
\newcommand{\Lie}{\operatorname{Lie}}
\newcommand{\Id}{\operatorname{Id}}
\newcommand{\SL}{\operatorname{SL}}
\newcommand{\SU}{\operatorname{SU}}
\newcommand{\pSL}{\operatorname{(P)SL}}
\renewcommand{\sl}{\mathfrak{sl}}
\newcommand{\Ann}{\operatorname{Ann}}
\newcommand{\End}{\operatorname{End}}
\newcommand{\Nat}{\operatorname{Nat}}
\title{Locally quadratic modules and minuscule representations}
\author{Adrien Deloro}
\begin{document}

\maketitle

\renewcommand{\>}{\rangle}
\renewcommand{\sl}{\mathfrak{sl}}
\renewcommand{\th}{^{\rm th}}
\renewcommand{\d}{\partial}

\begin{flushright}
\em Being natural is simply a pose, and the most irritating pose I know.
\end{flushright}

\begin{abstract}
We give a new proof of a theorem by Timmesfeld showing that for simple algebraic groups, abstract modules where all roots act quadratically are direct sums of minuscule representations.
\end{abstract}


In order to handle groups and Lie rings in a single statement one needs a bit of notation.

\begin{notation*}\
\begin{itemize}
\item
If $G$ is a group and $V$ is a $\Z[G]$-module, let $\Triv_V(G) = C_V(G) = \{v \in V: \forall g\in G, g \cdot v = v\}$ and $\lfloor G, V\rfloor = [G, V] = \< g\cdot v - v: (g, v) \in G\times V\>$;
\item
if $\fg$ is a Lie ring and $V$ is a $\Z[\fg]$-module, let $\Triv_V(\fg) = \Ann_V(\fg) = \{v \in V: \forall z \in \fg, z\cdot v = 0\}$ and $\lfloor \fg, V\rfloor = \fg \cdot V = \<z \cdot v: (z, v) \in \fg \times V\>$.
\end{itemize}
\end{notation*}

\begin{theorem*}[{also in \cite{TComplete}}]
Let $\K$ be a field of characteristic $\neq 2$ with more than three elements and $\bG$ be one of the simple algebraic groups (of classical or exceptional type; untwisted). Let $G = \bG_\K$ be the abstract group of $\K$-points of the functor $\bG$ and $\fg = (\Lie \bG)_\K$ be the abstract Lie \emph{ring} of $\K$-points of the functor $\Lie \bG$. Let $\cG$ be either $G$ or $\fg$ and $V$ be a $\Z[\cG]$-module.

Suppose that all roots act quadratically.
Then $V = \Triv_V(\cG) \oplus \lfloor \cG, V\rfloor$ and $\lfloor \cG, V\rfloor$ can be equipped with a $\K$-vector space structure making it isomorphic to a direct sum of minuscule representations of $\cG$ as a $\K[\cG]$-module.
\end{theorem*}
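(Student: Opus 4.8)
The plan is to exploit the quadratic hypothesis \emph{locally}, first on the rank-one subsystem generated by a single root, then propagate the information across the Dynkin diagram through rank-two subsystems, and only at the very end read off the representation-theoretic conclusion.

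\textbf{Step 1 (linearising one root).} Fix a root $\alpha$ and write the action of the root element as $\Id + e_\alpha(t)$ in the group case (resp.\ as the action of $t\,e_\alpha\in\fg$ in the Lie case). Quadraticity means precisely that $e_\alpha(s)e_\alpha(t)=0$ on $V$; in the group case, combined with $x_\alpha(s)x_\alpha(t)=x_\alpha(s+t)$, this forces $t\mapsto e_\alpha(t)$ to be \emph{additive}, so each root contributes a square-zero family parametrised $\K$-additively (in the Lie case square-zero is the whole content). Conjugating by the coroot torus, $h_\alpha(\lambda)\,x_\alpha(t)\,h_\alpha(\lambda)^{-1}=x_\alpha(\lambda^{2}t)$, ties this additive family to the torus scaling. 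Passing to the $\pSL_2$ (resp.\ $\sl_2$) generated by $\pm\alpha$, one shows its quadratic module splits as $\Triv_V(\pSL_2)\oplus\lfloor\pSL_2,V\rfloor$ with $\lfloor\pSL_2,V\rfloor$ a direct sum of natural two-dimensional modules; this is where $\operatorname{char}\K\neq2$ and $|\K|>3$ first enter, and it endows each such local commutator with a $\K$-vector space structure.

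\textbf{Step 2 (splitting off the trivial part).} I would then glue these rank-one splittings into $V=\Triv_V(\cG)\oplus\lfloor\cG,V\rfloor$. The trivial summands arising from distinct roots must be shown to coincide, and the nontrivial summands $\lfloor U_\alpha,V\rfloor$ to generate a single $\cG$-stable complement; the Chevalley commutator relations, together with the $\tfrac12$ available in characteristic $\neq2$, supply the compatibility and yield a genuinely $\cG$-invariant idempotent realising the decomposition.

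\textbf{Step 3 (one global $\K$-structure) --- the crux.} Each simple root furnishes a scalar multiplication on its local natural modules, and the essential difficulty is to prove these agree and assemble into a single $\K$-vector space structure on $W:=\lfloor\cG,V\rfloor$ for which \emph{every} element of $\cG$ acts $\K$-linearly. I expect to verify compatibility one rank-two subsystem at a time ($A_2$, $B_2=C_2$, $G_2$), matching the two candidate scalar actions by means of the explicit quadratic modules there and the Steinberg relations; connectedness of the Dynkin diagram then propagates one consistent $\K$-structure over all of $W$. \emph{This gluing --- upgrading the abstract $\Z[\cG]$-module to a bona fide $\K[\cG]$-module, uniformly for the group and the Lie ring --- is the main obstacle}, and it is exactly where $|\K|>3$ (enough torus scalars, no small-field pathologies) and $\operatorname{char}\K\neq2$ are indispensable.

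\textbf{Step 4 (reading off minuscule summands).} Once $W$ is a $\K[\cG]$-module I would diagonalise it under a maximal torus into weight spaces $W=\bigoplus_\chi W_\chi$. Quadraticity of each root forces every $\alpha$-string to have length $\le2$, i.e.\ $\langle\chi,\alpha^\vee\rangle\in\{-1,0,1\}$ for every occurring weight $\chi$ and every root $\alpha$ --- but this is precisely the defining condition for $\chi$ to be \emph{minuscule}. Hence every nonzero weight of $W$ is minuscule and occurs with the (multiplicity-one) weight data of a minuscule representation, so $W$ is a direct sum of minuscule irreducibles; in particular for $E_8$, $F_4$, $G_2$, which admit no nonzero minuscule weight, one gets $W=0$. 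Semisimplicity of the reconstructed module (again using $\operatorname{char}\K\neq2$) then delivers the asserted direct-sum decomposition, completing the proof.
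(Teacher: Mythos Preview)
Your Steps~1 and~4 are right in spirit and match the paper, but the paper inverts your order of operations in a way that exposes a real gap in your Step~3. The paper does \emph{not} attempt to glue the local $\K$-structures furnished by different simple roots; these live on different subgroups $\lfloor\cG_\alpha,V\rfloor$ and are given by genuinely different toral operators (on a vector with $\langle\mu,\alpha^\vee\rangle=1$ the $\alpha$-scalar is $\tau_{\alpha,\lambda}$, the $\beta$-scalar is $\tau_{\beta,\lambda}$, and on a piece with $\langle\mu,\alpha^\vee\rangle=0$ root $\alpha$ supplies no scalar at all). A rank-two compatibility check does not by itself produce a global action: one first needs a common refinement of the various $\lfloor\cG_\alpha,V\rfloor$, and that refinement is precisely what the paper builds \emph{before} any linearisation. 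It introduces ``spots'' $S_\mu=\bigcap_{\alpha\in\Phi_s}V_{(\mu,\alpha^\vee)}$ --- weight-space surrogates defined from the quadratic $\bSL_2$-data alone --- proves $V=\bigoplus_\mu S_\mu$ (in the group case by a rank-two bond analysis using the involutions $i_\alpha$, since no toral element alone distinguishes $\langle\mu,\alpha^\vee\rangle=+1$ from $-1$), and then shows that operators $\omega_\alpha$ (for the Lie ring, built in the enveloping ring as $1-h_\alpha^2+x_\alpha+x_{-\alpha}$) permute spots as the Weyl reflections permute masses. Only then does it fix \emph{one} root $\alpha_0$ and \emph{one} spot $S_{\mu_0}$, impose Timmesfeld's scalar action there, and \emph{transport} it everywhere via $\lambda\cdot v=\omega_\gamma\,\tau_{\alpha_0,\lambda}\,\omega_\gamma^{-1}v$; independence of the Weyl word $\gamma$ is the actual work. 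So your Step~4 weight decomposition is done first, abstractly, and your Step~3 gluing is replaced by a single-root definition plus Weyl transport.

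A smaller slip: in Step~2 the trivial summands $\Triv_V(\cG_\alpha)$ for distinct $\alpha$ do \emph{not} coincide; what holds is that their intersection is $\Triv_V(\cG)=S_0$, complemented by $\bigoplus_{\mu\neq0}S_\mu$ --- again a byproduct of the spot decomposition rather than a separate gluing argument.
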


The introduction will motivate the statement (\S\S\ref{s:quadraticity} and \ref{s:minuscule}) and also explain what made us give a new proof -- which we believe is completely natural (\S\ref{s:Musset}). Further comments are made in \S\ref{s:remarks}.
The argument itself is in \S\ref{S:proof}.

\section{Introduction}\label{S:Introduction}

The article studies some representations of the simple algebraic groups as abstract group modules
. This amounts to doing representation theory on purely group-theoretic grounds.
%
The topic which originally attracted us is that of linear reconstruction: given an algebraic group $G$ over $\K$ seen as an abstract group and a $\Z[G]$-module $V$, try to retrieve a $\K$-linear structure on $V$ induced by the action of $G$.

\subsection{Quadratic Actions}\label{s:quadraticity}

A typical example of linear reconstruction is the following theorem which was proved in the mid-eighties by S. Smith and F.G. Timmesfeld, independently. $U$ stands for a unipotent subgroup of $\SL_2(\K)$, say the group of upper-triangular matrices with $1$ on the diagonal; quadraticity of the $G$-module $V$ means that $[U, U, V] = 0$ (which does not depend on the unipotent subgroup by conjugacy).

\begin{TQT*}[{\cite[Exercise 3.8.1 of chapter I]{Timmesfeld}; also \cite{Smith}}]
Let $\K$ be a field of characteristic $\neq 2$ with more than three elements, $G = \SL_2 (\K)$, and $V$ be a \emph{quadratic} $G$-module. 
Then $V = C_V(G) \oplus [G, V]$, and there exists a $\K$-vector space structure on $[G, V]$ making it isomorphic to a direct sum of copies of $\Nat \SL_2(\K)$ as a $\K[G]$-module.
\end{TQT*}

Since the field $\K$ is rather arbitrary, there are no character-theoretic nor Lie-theoretic methods available; $\SL_2(\K)$ is seen as an abstract group with no extra structure, and the proof is therefore by computation. One fixes generators and works with the so-called Steinberg relations for $\SL_2(\K)$.

The lack of Lie-theoretic information incidently suggests to ask the same question about $\sl_2(\K)$-modules. For the problem of linear reconstruction to make sense we view $\sl_2(\K)$ as a Lie \emph{ring}, viz. an abelian group with a bracket (forgetting the underlying vector space structure); an $\sl_2(\K)$-module need not be a vector space over $\K$.
We let $\fu_+$ and $\fu_-$ be the abelian subrings of upper-triangular (resp. lower-triangular) matrices with $0$ on the diagonal.
Quadraticity of the $\fg$-module $V$ now means that both $\fu_+^2\cdot V = \fu_-^2 \cdot V = 0$ (see \S\ref{s:remarks} for more on this two-sided assumption).

\begin{logarithm*}[\cite{TV-I}]
Let $\K$ be a field of characteristic $\neq 2$, $\fg = \sl_2(\K)$, and $V$ be a quadratic $\fg$-module.
Then $V = \Ann_V(\fg) \oplus \fg\cdot V$, and there exists a $\K$-vector space structure on $\fg\cdot V$ making it isomorphic to a direct sum of copies of $\Nat \sl_2(\K)$ as a $\K[\fg]$-module.
\end{logarithm*}

\subsection{Minuscule Representations}\label{s:minuscule}

By definition, the minuscule representations of a semisimple Lie algebra are its irreducible representations such that the action of the Weyl group on the set of weights is transitive; the latter condition is equivalent to: every root element acts with $x^2 = 0$ \cite[Chap. VIII, \S7.3, Propositions 6 and 7]{BLie78}. In the simple case, the list of minuscule weights can be determined from that of fundamental weights, and minuscule representations of the various simple Lie algebras are therefore known. They are as follows: all exterior powers of the natural representation for type $A_n$, the spin representation for type $B_n$, the natural representation for type $C_n$, the natural and the two half-spin representations for type $D_n$, two representations for type $E_6$, one for type $E_7$, none for types $E_8$, $F_4$, $G_2$. \cite[Chap. VIII, end of \S7.3]{BLie78}.

It is tempting to see Timmesfeld's Quadratic Theorem and its Lie-ring analogue as identification results for the unique minuscule representation of the algebraic group $\bSL_2$ among abstract $G$- or $\fg$-modules. And indeed, our result is the natural extension of Timmesfeld's quadratic theorem to the other simple algebraic groups (and to their Lie algebras, seen as Lie rings).



%
%



\subsection{Je suis venu trop tard dans un monde trop vieux}\label{s:Musset}

Only while typing our proof did we learn about the following.

\begin{fact*}[Timmesfeld, {\cite{TComplete}}]
Let $G$ be a finite Lie-type group over $GF(q)$, $q = p^n$, $p \neq 2$, different from $\SL_2(3)$, with Dynkin diagram $\Delta = \Delta(I)$ and let $V$ a $\Z_p G$-module, on which the root groups of $G$ act quadratically, i.e. $[V, A_r, A_r] = 0$ for all roots $r$ of the root system of $G$. Then $V = C_V(G) \oplus [V, G]$ and $[V, G]$ is the direct sum of irreducible $\Z_p G$-modules $V_j$ \emph{[the list of which is as expected and explicitly given]}.
\end{fact*}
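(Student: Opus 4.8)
The plan is to reduce the general simple group to the rank-one case by restricting the module $V$ to the various $\SL_2$ (or $\sl_2$) subgroups generated by opposite root subgroups, applying the Quadratic Theorem (resp. its Lie-ring analogue) to each of these, and then assembling the resulting $\K$-linear structures into a single coherent one. The quadraticity hypothesis is exactly what is needed to feed each rank-one subsystem into the known rank-one result, so the local data should be available everywhere; the work is entirely in globalising it.

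\textbf{Step 1 (Root $\SL_2$'s and local structure).} For each root $r$ in the root system of $\bG$, let $\cG_r$ be the corresponding rank-one subobject: the subgroup $\SL_2(\K)$ (resp. the Lie subring $\sl_2(\K)$) generated by the root subgroups $U_{\pm r}$ (resp. $\fu_{\pm r}$). Quadraticity of all roots says precisely that $V$, restricted to $\cG_r$, is a quadratic module in the sense of the rank-one theorems. Hence for each $r$ the Quadratic Theorem (or its Lie analogue) applies and gives $V = \Triv_V(\cG_r) \oplus \lfloor \cG_r, V\rfloor$ together with a $\K$-vector space structure on $\lfloor \cG_r, V\rfloor$ realising it as a sum of copies of the natural module. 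The first task is to show these local decompositions are mutually compatible, so that the various $\K$-scalar actions agree on overlaps and patch to one global $\K$-action on all of $V$. The key technical point is that on a root subgroup $U_r$ the scalar multiplication by $\lambda \in \K$ is forced by the $G$-action: it is read off from how $U_r$ moves $V$, and is therefore independent of which rank-one subgroup we use to compute it. This is where the assumption $|\K| > 3$ and $\operatorname{char}\K \neq 2$ is consumed, exactly as in the rank-one statements.

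\textbf{Step 2 (Assembling the $\K$-structure and splitting off $\Triv_V(\cG)$).} Once a single $\K$-vector space structure on $V$ (or on the relevant part of it) is in hand and the action of every root element is $\K$-linear, the whole group $\cG$ acts $\K$-linearly, because $\cG$ is generated by its root subgroups. One then verifies the global decomposition $V = \Triv_V(\cG) \oplus \lfloor \cG, V\rfloor$. The containment $\lfloor \cG, V\rfloor = \sum_r \lfloor \cG_r, V\rfloor$ and the identification $\Triv_V(\cG) = \bigcap_r \Triv_V(\cG_r)$ should follow from the generation of $\cG$ by the $\cG_r$, and the complementation from the local splittings. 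At this stage $\lfloor \cG, V\rfloor$ is a genuine $\K[\cG]$-module.

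\textbf{Step 3 (Identifying the representation via weights).} It remains to show that the $\K[\cG]$-module $\lfloor \cG, V\rfloor$ is a direct sum of minuscule representations. Here I would use the characterisation recalled in \S\ref{s:minuscule}: a representation is a sum of minuscule ones precisely when every root element $x$ acts with $x^2 = 0$, i.e. quadratically. Since each root acts quadratically on $V$ by hypothesis, and this passes to the $\K$-linear module $\lfloor \cG, V\rfloor$, the weights of $\lfloor \cG, V\rfloor$ form a union of Weyl-orbits of minuscule weights; decomposing into irreducibles and invoking complete reducibility (now legitimate since we have a $\K$-linear structure and can use highest-weight theory over $\K$) yields the claimed direct-sum decomposition into minuscule representations.

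\textbf{Main obstacle.} I expect the crux to be Step 1: proving that the $\K$-scalar structures produced independently by the different root $\SL_2$'s are mutually consistent, so that they glue to one global $\K$-action. The rank-one theorems each manufacture a scalar action, but a priori nothing guarantees that the scalar $\lambda$ acting via $\cG_r$ equals the scalar $\lambda$ acting via $\cG_{r'}$ on a common piece of $V$; one must show the scalar is intrinsic to the $G$-action. Relatedly, one must ensure the $\K$-structure on $\lfloor \cG_r, V\rfloor$ extends compatibly across commuting and non-commuting pairs of root subgroups, which is governed by the combinatorics of the root system (the Steinberg/Chevalley commutator relations). Controlling these commutator relations uniformly across all types — classical and exceptional — while keeping the argument "natural" and computation-light is, I anticipate, the heart of the matter.
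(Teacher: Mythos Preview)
First, a clarification: the Fact* is quoted from Timmesfeld's paper \cite{TComplete} and is not proved in this paper; the paper instead proves the overlapping main Theorem* (covering infinite fields and Lie rings). Your proposal is visibly aimed at that Theorem*, so I compare against the proof in \S\ref{S:proof}.

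Your Step~1 identifies the right local input and you correctly flag the gluing of the various rank-one $\K$-structures as the crux. But the paper does \emph{not} attempt that gluing. Instead it first builds an abstract weight-space decomposition from scratch: it defines ``spots'' $S_\mu$ (Definition~\ref{d:spotsandmasses}), proves $V = \bigoplus_\mu S_\mu$ using the root involutions $i_\alpha$ (Proposition~\ref{p:sumofspots}), and then shows that the Weyl group permutes spots as expected, $\omega_\alpha S_\mu = S_{\sigma_\alpha(\mu)}$ (Proposition~\ref{p:transitions}). Only after this is a $\K$-structure introduced, and it is defined on \emph{one} spot $S_{\mu_0}$ via \emph{one} fixed simple root $\alpha_0$, then transported to the other spots by Weyl elements (Notation~\ref{n:action}). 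Your consistency problem thus becomes the well-definedness of this transport (independence of the path in the Weyl group), which the paper handles by a sign argument in Claim~\ref{p:linear:c:welldefined}. So the obstacle you anticipate is real, but the paper's route around it is orthogonal to ``prove the local scalars agree on overlaps''; your sketch of why the scalar is intrinsic (``read off from how $U_r$ moves $V$'') is not an argument, and the paper's experience suggests it would be hard to make one directly.

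Your Step~3 has a genuine gap. Having obtained a $\K$-linear $\cG$-action, you invoke ``highest-weight theory over $\K$'', complete reducibility, and the Bourbaki characterisation of minuscule representations. But \cite[VIII, \S7.3]{BLie78} is about rational representations of a semisimple Lie algebra; an abstract $\K$-linear action of $G = \bG_\K$ need not be algebraic, so there is no a~priori weight decomposition, no highest-weight theory, and no complete reducibility available. The paper is explicit that it is self-contained modulo the rank-one theorems: it manufactures the weight decomposition by hand (the spots again), and in the Postlude proves by direct root-system combinatorics that every non-zero mass is a minuscule weight, never appealing to representation theory beyond rank one. Your Step~3 as written would import exactly the machinery the paper is at pains to avoid, and which is not obviously licensed in this abstract setting.
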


Moreover, Timmesfeld observes that in the case of non-exceptional groups his proof extends to \emph{infinite} fields of characteristic not $2$. He also handles twisted groups (notably $\SU_n$), and we cannot do the twist.

Stumbling upon \cite{TComplete} was a blow to the author.
Our goal was to generalise Timmesfeld's partial and much earlier work \cite{TIdentification}. But \cite{TComplete} was submitted before we even started to think about the topic; by the time it was published, we could only treat $\SL_n(\K)$ and $\sl_n(\K)$. Only two years after its publication did we: complete our proof, type it down, and having finally done this, become aware of \cite{TComplete}.

Why make our own work public then? Covering Lie rings as well is certainly no sufficient reason.
But we believe that our proof will not lack interest since it is:
\begin{itemize}
\item
essentially different from Timmesfeld's -- having been developed independently as we explained, our method linearises without caring for what the resulting representation will be (something which can be determined afterwards, if necessary), while the philosophy of \cite{TComplete} involves explicit module identification;
\item
uniform, while \cite{TComplete} is a case division;
\item
entirely self-contained modulo the Quadratic Theorem and its Lie-ring analogue, while \cite{TComplete} requires non-trivial representation-theoretic information (\cite[end of \S1]{TComplete}; \cite[Lemma 2.6]{TComplete} is one crux of the argument); 
\item
effective since the linear structure is defined explicitly provided one has realised one root substructure of type $A_1$ and the global Weyl group;
\item
transparent -- in our opinion.
Our proof is about minuscule weights and transitivity of the Weyl group, which are the natural phenomena to investigate when one is talking about minuscule representations. The structure of the argument is explained at the beginning of \S\ref{S:proof}.
\end{itemize}


\subsection{Remarks and Questions}\label{s:remarks}

Our last introductory subsection consists of remarks on the statement and its proof, together with a few questions on possible extensions. The technical discussion here involves a number of pathologies and is not necessary in order to understand the proof in \S\ref{S:proof}.

\begin{itemize}
\item
Our interpretation of ``all roots act quadratically'' is that all root $\bSL_2$-substructures in one realisation of $\cG$ act quadratically. This was explained in \S\ref{s:quadraticity}. For $\alpha$ a root let $\cU_\alpha$ be the associated root substructure (i.e., subgroup or Lie subring).
\begin{enumerate}
\item
As stated the assumption means: for $\alpha \in \Phi$ (the root system), $\lfloor \cU_\alpha, \lfloor \cU_\alpha, V\rfloor \rfloor = 0$.
\item
One may try to restrict to positive roots: for $\alpha \in \Phi_+$ (positive roots), $\lfloor \cU_\alpha, \lfloor \cU_\alpha, V\rfloor \rfloor = 0$.
\item
One may try to restrict to simple roots: for $\alpha \in \Phi_s$ (simple roots), $\lfloor \cU_\alpha, \lfloor \cU_\alpha, V\rfloor \rfloor = 0$.
\item
One may try to restrict to root elements: for $\alpha \in \Phi$ (resp. $\Phi_+$, $\Phi_s$) and some $\cu \in \cU_\alpha$ not the identity, $\lfloor \cu_\alpha, \lfloor \cu_\alpha, V\rfloor \rfloor = 0$.
\end{enumerate}
These slight variations can have unexpected effects since we are dealing with an abstract group or Lie ring.
\begin{itemize}
\item
For instance, supposing that the positive Lie subring $\fu_+ \leq \sl_2(\K)$ acts quadratically does not fully guarantee that so does the negative Lie subring $\fu_-$.
In characteristic neither $2$ nor $3$ these turn out to be equivalent \cite[Variation 12]{TV-I} but in characteristic $3$ one can construct $\sl_2(\K)$-modules with $\fu_+^2 \cdot V = 0 \neq \fu_-^2 \cdot V$ \cite[\S4.3]{TV-I}. (For a more general discussion of the non-equality of nilpotence orders of generators of $\fu_+$ and $\fu_-$ in $\End(V)$, see \cite[\S\S3.2 and 3.3]{TV-II}.)
As a consequence, an assumption restricted to positive roots is too weak.
\item
The reader is now aware that in the case of the Lie ring, lifting the action of the Weyl group on roots to an action on the module is non-trivial.
The caveat extends to Lie rings not of type $A_1$. It is therefore not clear whether all simple roots of the same length must have similar actions on $V$ since conjugacy under the Weyl group may not be compatible with the action on the module. So an assumption restricted to simple roots and their opposites could be too weak as well (we did not look for a counter-example).
\item
Finally, it is the case that for an action of $\SL_2(\K)$ in characteristic neither $2$ nor $3$, $(u-1)^2 = 0$ for some element $u \in U\setminus\{1\}$ implies $[U, U, V] = 0$ \cite[Variation 7]{TV-I}; we do not know what happens in characteristic $3$ (bear in mind that the field can be infinite). For an action of $\sl_2(\K)$ it suffices to be in characteristic not $2$: $x^2\cdot V = 0$ for some $x \in \fu_+\setminus\{0\}$ does imply $\fu_+^2\cdot V = 0$ \cite[Variation 9]{TV-I} (which in characteristic $3$ does however not entail $\fu_-^2\cdot V = 0$ as we just said). Hence an assumption restricted to root elements is too weak.
\end{itemize}
There are two conclusions. First, in the case of the group and characteristic not $3$, it would be enough to suppose that one element in one root subgroup of each length is quadratic -- which makes an assumption on at most two elements. Now in the case of Lie rings, apparently minor changes in the hypothesis can give rise to pathologies we shall prefer not to discuss in the course of the argument: let us stick to the assumption that for any $\alpha \in \Phi$, $\lfloor \cU_\alpha, \lfloor \cU_\alpha, V\rfloor \rfloor = 0$.
\item
Timmesfeld \cite[Corollary]{TComplete} proves that in the case of the \emph{groups} of type $A-D-E$ over \emph{finite fields of characteristic not $2$}, it suffices that one element of $G$ acts quadratically for $V$ to be as above.

Here are a few limits to Timmesfeld's Corollary:
\begin{enumerate}
\item
as noted in \cite{TComplete}, one needs type $A-D-E$ in order to conjugate roots (for the other types, Timmesfeld discusses why an assumption on long roots is not enough);
\item
the group must be finite since the result uses the classification of so-called quadratic pairs (and therefore, if we are not mistaken, the characteristic $3$ analysis in \cite[Corollary]{TComplete} does require the group to be finite);
\item
$G$ must be a group and cannot be a Lie ring, since we noted that the statement fails for $\sl_2(\K)$ in characteristic $3$.
\end{enumerate}

Our alternative approach to Timmesfeld's Corollary, not using the classification of quadratic pairs, and valid for \emph{groups} of type $A-D-E$ over \emph{possibly infinite fields of characteristic neither $2$ nor $3$}, can be read from the previous remark: use \cite[Variation 7]{TV-I} to find a quadratic root subgroup, then conjugate roots.
\item
Our proof (and this is the one thing it has in common with Timmesfeld's) makes crucial use of the ``characteristic not $2$'' assumption as it relies on the action of central involutions in root $\SL_2$-subgroups: this will be clear in the proofs of Propositions \ref{p:sumofspots} and \ref{p:transitions}.
We do not know how to dispense with these involutions and it is not clear whether their role is that of mere accelerators or more essential.

The latter question makes sense for art's sake but there is in any case no hope to extend the theorem to characteristic $2$ (the Lie ring being of course left aside), since complete reducibility for quadratic $\SL_2(\K)$-modules could fail in characteristic $2$, a topic our utter ignorance prevents us from dwelling on.
\item
Since our linearisation argument goes uniformly and does not require any form of module identification, knowing the precise isomorphism type of $\bG$ is never necessary. A careful reader will wonder how much information is really needed, for instance what could be done without the assumption that $\bG$ is a simple algebraic group.
As a matter of fact, the very algebraicity of $\cG$ could perhaps be relaxed to something weaker.
There are indications that one does not need the torus to be $\K$-split, since most of the argument uses little semi-simple elements beyond involutions.
It might be enough to work in a more abstract setting than Chevalley groups: a group with some form of root datum, various root substructures being realised over various fields. In the end, the linear structure would arise only from the field associated to the root $\alpha_0$ of Notation \ref{n:alpha0}.
\end{itemize}

\section{The Proof}\label{S:proof}

For the reader's convenience let us state the theorem again.

\begin{theorem*}
Let $\K$ be a field of characteristic $\neq 2$ with more than three elements and $\bG$ be one of the simple algebraic groups (of classical or exceptional type; untwisted). Let $G = \bG_\K$ be the abstract group of $\K$-points of the functor $\bG$ and $\fg = (\Lie \bG)_\K$ be the abstract Lie \emph{ring} of $\K$-points of the functor $\Lie \bG$. Let $\cG$ be either $G$ or $\fg$ and $V$ be a $\Z[\cG]$-module.

Suppose that all roots act quadratically.
Then $V = \Triv_V(\cG) \oplus \lfloor \cG, V\rfloor$ and $\lfloor \cG, V\rfloor$ can be equipped with a $\K$-vector space structure making it isomorphic to a direct sum of minuscule representations of $\cG$ as a $\K[\cG]$-module.
\end{theorem*}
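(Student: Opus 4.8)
The plan is to work \emph{locally}, one root $\SL_2$-substructure at a time, and then to glue the resulting linear structures along the Weyl group. Recall that $\cG$ is generated by its root substructures $\cU_\alpha$, that for each root $\alpha$ the pair $\cU_{\pm\alpha}$ generates a copy $\cG_\alpha$ of $\SL_2(\K)$ (resp.\ $\sl_2(\K)$), and that by hypothesis this copy acts quadratically on $V$. Timmesfeld's Quadratic Theorem (resp.\ its Lie-ring analogue) therefore applies to each $\cG_\alpha$ separately: it yields a splitting $V = \Triv_V(\cG_\alpha)\oplus\lfloor\cG_\alpha,V\rfloor$ together with a $\K$-vector space structure on $\lfloor\cG_\alpha,V\rfloor$ under which it is a direct sum of copies of $\Nat\SL_2(\K)$ (resp.\ $\Nat\sl_2(\K)$); this is where the assumptions $\operatorname{char}\K\neq 2$ and $|\K|>3$ enter. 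The whole difficulty is that a priori these local $\K$-structures, one per root, need not be compatible; producing a single global scalar multiplication is the content of the theorem.

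First I would organise $V$ into weight spaces using the central involutions. In each $\cG_\alpha$ the central element $z_\alpha$ (the image of $-\Id$, of order $2$ since $\operatorname{char}\K\neq 2$) acts as $+1$ on $\Triv_V(\cG_\alpha)$ and as $-1$ on $\lfloor\cG_\alpha,V\rfloor$, the latter being a sum of natural modules; so $z_\alpha$ is already diagonalised by the local splitting. The family $\{z_\alpha\}_\alpha$ generates a finite elementary abelian $2$-group inside the torus, and, these being commuting involutions each with a direct-sum eigenspace decomposition, one can refine to simultaneous eigenspaces. These common eigenspaces are the ``spots'', each indexed by a character, i.e.\ by the reduction modulo $2$ of a weight. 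One shows that $V$ is the direct sum of its spots, that the trivial spot contributes $\Triv_V(\cG)=\bigcap_\alpha\Triv_V(\cG_\alpha)$, and that the nontrivial spots sum to $\lfloor\cG,V\rfloor$; this already delivers the abstract decomposition $V=\Triv_V(\cG)\oplus\lfloor\cG,V\rfloor$ and isolates the part to be linearised.

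Next I would transport a single $\K$-structure across all spots. Fix one reference root $\alpha_0$; Timmesfeld's theorem equips the corresponding natural-module part with a $\K$-action, and this is the only field datum the argument ever needs. In the group case the Weyl group is realised inside $G$ by elements $n_\alpha\in G_\alpha$ lifting the reflections $s_\alpha$, and these permute the spots exactly as $W$ permutes weights; in the Lie-ring case, where no such group elements exist, the corresponding transport must be carried out through the reflection operator on each local natural module, and establishing that this transport is well defined is precisely the extra subtlety flagged in the Remarks. Quadraticity of every root is the module-level shadow of the minuscule condition $\langle\mu,\alpha^\vee\rangle\in\{-1,0,1\}$; combined with transitivity of $W$ on a minuscule orbit \cite[Chap.~VIII, \S7.3]{BLie78}, it ensures that every spot can be reached from the reference one by a product of reflections. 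Transporting scalar multiplication along such a product defines a candidate $\K$-action on every spot, hence on all of $\lfloor\cG,V\rfloor$; one then checks that each $\cU_\alpha$ acts $\K$-linearly, since the minuscule bound forces it to act through the natural-module ``rungs'' already described by Timmesfeld's theorem on $\cG_\alpha$, on which $\K$-linearity holds by construction.

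The main obstacle is well-definedness of this transported scalar multiplication: a given spot is reached by many different words in the reflections, and all of them must induce the same $\K$-action. This is a cocycle condition over the Weyl group, and the point where $\operatorname{char}\K\neq 2$ is essential: the ambiguity in a relation among the $n_\alpha$ is measured by products of the central involutions $z_\alpha$, whose action on each spot is already pinned down to $\pm 1$, so the scalars transported along two words differ at worst by a controllable sign, which the minuscule combinatorics together with the relations $n_\alpha^2=z_\alpha$ force to be trivial. Once consistency is established the global $\K$-vector space structure on $\lfloor\cG,V\rfloor$ is in place, and grouping the spots into Weyl orbits exhibits $\lfloor\cG,V\rfloor$ as a direct sum of minuscule $\K[\cG]$-modules, as required. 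The Lie-ring case runs along the same lines with $\sl_2(\K)$ and its analogue replacing $\SL_2(\K)$ and Timmesfeld's theorem, the two-sided quadraticity hypothesis on every root being exactly what keeps the local input symmetric and the gluing valid.
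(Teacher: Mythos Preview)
Your overall strategy --- apply Timmesfeld locally, decompose $V$ into weight-like pieces, transport one $\K$-structure along Weyl reflections, then check linearity --- is exactly the paper's. But there is a genuine gap in your implementation of the decomposition step.

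You propose to define spots as simultaneous eigenspaces of the central involutions $z_\alpha$, indexed by characters of the elementary abelian $2$-group they generate, i.e.\ by weights reduced modulo $2$. This is too coarse. The involution $z_\alpha$ distinguishes $\Triv_V(\cG_\alpha)$ (eigenvalue $+1$) from $\lfloor\cG_\alpha,V\rfloor$ (eigenvalue $-1$), but it cannot separate $\lfloor\cU_\alpha,V\rfloor$ from $\lfloor\cU_{-\alpha},V\rfloor$: both lie in the $-1$-eigenspace. Yet the whole transport argument depends on knowing, for each simple root $\alpha$ and each spot, which of the \emph{three} values $\langle\mu,\alpha^\vee\rangle\in\{-1,0,1\}$ obtains, not merely its parity. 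Without that sign information the Weyl reflection $\sigma_\alpha$ does not act in a well-defined way on your mod-$2$ labels, and the transported scalar multiplication has no target.

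The paper confronts this head-on. Its spots $S_\mu$ are defined by the three-valued condition $S_\mu=\bigcap_{\alpha\in\Phi_s}V_{(\mu,\alpha^\vee)}$ with $V_{(\mu,\alpha^\vee)}$ equal to $\lfloor\cU_{-\alpha},V\rfloor$, $\Triv_V(\cG_\alpha)$, or $\lfloor\cU_\alpha,V\rfloor$ according to the value $-1,0,1$. In the Lie-ring case this decomposition is immediate from diagonalising the $h_\alpha$. In the group case the paper explicitly notes that no toral element of $G_\alpha$ can detect the sign, and instead uses the operator $\d_\alpha w_\alpha$ (which acts as $-1$ on $\lfloor U_\alpha,V\rfloor$ and as $0$ elsewhere) to refine the involution decomposition. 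Proving that $V=\bigoplus_\mu S_\mu$ and that $\omega_\alpha S_\mu=S_{\sigma_\alpha(\mu)}$ then requires a careful case analysis according to the bond type ($\alpha\!-\!\beta$, $\alpha\!\Leftarrow\!\beta$, $\alpha\!\Rightarrow\!\beta$, $\alpha\!\Lleftarrow\!\beta$) between adjacent simple roots; this is where the real work in the paper lies, and your outline does not touch it. Your sketch of the well-definedness cocycle is also optimistic: the paper's argument for this is a combinatorial induction on the length of a Weyl word, not a soft appeal to signs.
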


Recall from \cite[Chap. VIII, \S7.3, Proposition 6]{BLie78} that an irreducible representation of a semi-simple Lie algebra is minuscule iff for any weight $\mu$ and root $\alpha$, one has (with classical notations) $\mu(h_\alpha) \in \{-1, 0, 1\}$. For our purposes the latter property seems more tractable at first than a definition in terms of the action of the Weyl group.

The proof will therefore focus on weights and weight spaces. Of course in the absence of a field action the definition requires some care; the relevant analogues of weights and weight spaces will be called \emph{masses} and \emph{spots} (Definition \ref{d:spotsandmasses}).
We shall first show that the module is the direct sum of its spots (Proposition \ref{p:sumofspots}); the decomposition of an element of the module can be computed effectively. We then study how Weyl group elements permute spots. One point should be noted: if $\cG = G$, one can find elements lifting Weyl reflections in the normaliser of a maximal torus; if $\cG = \fg$, we must go to the enveloping ring since there are no suitable elements inside the Lie ring. But there is a slight trick enabling us to encode the Weyl group action in the latter case as well.
The action is then as expected (Proposition \ref{p:transitions}); the argument is the only step in the proof where we feel we actually do something.
Then Corollary \ref{c:Vcl} quickly enables us to reduce to an isotypical summand where the Weyl group acts transitively on masses. Once this is done we may define a field action on one arbitrary spot and use transitivity of the Weyl group to carry it around (Notation \ref{n:action}; here again the linear structure is given explicitly). Final linearity is easily proved in Proposition \ref{p:linear}.

\subsection{Prelude}\label{s:Prelude}

We presume the reader familiar with root systems, the Weyl group, and how elements of the normaliser of a maximal torus permute the various root subgroups: this will be one of the key ingredients of the proof. But we also wish to use products of ``root involutions'' in algebraic groups and the reader will find some refreshments here.

Fix a realisation of a simple algebraic group $G = \bG_\K$, and recall that central involutions of the various root $\SL_2$-subgroups can be computed from those attached to simple roots in the following way:
\begin{quote}
since for any root $\alpha$, one has $i_\alpha = \alpha^\vee(-1)$ (where $\alpha^\vee$ is the cocharacter $\K^\times \to T$, the torus), it suffices to express coroots in the cobasis.
\end{quote}
Consider for instance the case of $C_2$.

Normalising in such a way that long roots have Euclidean lengths $\sqrt{2}$, we may represent the dual system on the same picture, so that slightly abusing notations $\<\delta, \epsilon^\vee\>$ is given by $(\delta, \epsilon^\vee)$, where $\<\cdot, \cdot\>$ is the abstract root datum pairing and $(\cdot, \cdot)$ is the standard Euclidean dot product.
\[\begin{tikzpicture}[scale=.5]
     \draw[<->,thick] (-3,-.1) -- (3,-.1);
     \draw[<->,dotted] (-3,.1) -- (3,.1);
     \draw[<->,thick] (-.1,-3) -- (-.1,3);
     \draw[<->,dotted] (.1,-3) -- (.1,3);
    \foreach\ang in {45,135,...,315}{
     \draw[->,thick] (0,0) -- (\ang:2.12cm);
    }
\node[anchor=south west,scale=0.6] at (3,0) {$\beta^\vee$};
\node[anchor=south west,scale=0.6] at (-1.5,1.5) {$\alpha$};
    \foreach\ang in {45,135,...,315}{
     \draw[->,dotted] (0,0) -- (\ang:4.24cm);
    }
\node[anchor=north west,scale=0.6] at (3,0) {$\beta$};
\node[anchor=south west,scale=0.6] at (-3,3) {$\alpha^\vee$};
\node[anchor=south west,scale=0.6] at (1.5,1.5) {$\alpha+\beta$};
\node[anchor=south west,scale=0.6] at (3,3) {$(\alpha+\beta)^\vee$};
  \end{tikzpicture}
\]
It is clear from the latter picture that $(\alpha + \beta)^\vee = \alpha^\vee + 2 \beta^\vee$, and therefore $i_{\alpha + \beta} = i_\alpha i_\beta^2 = i_\alpha$.

The same picture allows of course to determine conjugates of root subgroups by elements of the Weyl group, with no computations. (We hope the following notations to be standard; in any case they will be introduced in Notation \ref{n:realisation:group} below.) Remember that $w_\gamma$ acts on root subgroups as $\sigma_\gamma$, the reflection with hyperplane $\gamma^\perp$, acts on roots: hence $w_\gamma U_\delta w_\gamma^{-1} = U_{\sigma_\gamma(\delta)}$ can be found graphically.

\subsection{Local Analysis}

The proof starts here. We may suppose the action to be non-trivial.
Let us first realise $\cG$, following the Chevalley-Steinberg ideology. We apologise for the necessarily heavy notations. As far as root data are concerned, these are fairly standard.

\begin{notation}[naming the root datum: $L, \Phi, L^\vee, \Phi^\vee, \<\cdot, \cdot\>, E, \sigma_\alpha, \Phi_+, \Phi_s$]
\label{n:rootdatum}\
\begin{itemize}
\item
Let $(L, \Phi, L^\vee, \Phi^\vee)$ be the root datum of $\bG$ and $\<\cdot, \cdot\> : L \times L^\vee \to \Z$ be the pairing; let $E = \R\otimes_\Z \Z\Phi$.
\item
For $\alpha \in \Phi$, let $\sigma_\alpha$ be the linear map on $E$ mapping $e$ to $e - \<e, \alpha^\vee\> \alpha$.
\item
Let $\Phi_+$ be a choice of positive roots and $\Phi_s$ be the (positive) simple roots.
\end{itemize}
\end{notation}

With this at hand we can realise $G$. General information on Chevalley groups can be found in \cite{CSimple} (in particular Chapters 5 and 6 there); sometimes our notations differ as we use $u$ for unipotent elements and $t$ for toral (semi-simple) elements; for elements associated to the Weyl group we use $w$. A particularly thorough reference is \cite[Expos\'e 23]{SGA3} but we shall avoid using geometric language.

\begin{notation}[realising $G$: $T, U_\alpha, G_\alpha, u_{\alpha, \lambda}, u_\alpha, w_\alpha, i_\alpha, t_{\alpha, \lambda}$]
\label{n:realisation:group}\
\begin{itemize}
\item
Fix an algebraic torus $T \leq G$; root subgroups will refer to this particular torus.
\item
For $\alpha \in \Phi$, let $U_\alpha$ be the root subgroup and $G_\alpha = G_{-\alpha} = \<U_\alpha, U_{-\alpha}\>$ be the root $\SL_2$-subgroup.
\item
Realising enables us to fix isomorphisms $\varphi_\alpha : \pSL_2(\K) \simeq G_\alpha$ mapping upper-triangular matrices to $U_\alpha$ and diagonal matrices to $T \cap G_\alpha$ and such that $\varphi_\alpha^{-1}\circ \varphi_{-\alpha} = \varphi_{- \alpha}^{-1}\circ \varphi_\alpha$ is the inverse-transpose automorphism.
Let:
\[u_{\alpha, \lambda} = \varphi_\alpha\left(\begin{pmatrix} 1 & \lambda \\ 0 & 1\end{pmatrix} \right), \quad t_{\alpha, \lambda} = \varphi_\alpha\left(\begin{pmatrix} \lambda \\  & \lambda^{-1}\end{pmatrix} \right)\]
For simplicity write $u_\alpha = u_{\alpha, 1}$.
\item
Let $w_\alpha = u_\alpha \cdot u_{-\alpha}^{-1}\cdot u_\alpha$ and $i_\alpha = w_\alpha^2$, an element with order at most $2$.

(It will be a consequence of Proposition \ref{p:localanalysis} below that $i_\alpha$ has order exactly $2$.)
\end{itemize}
\end{notation}

In particular, it should be noted that $w_{-\alpha} = w_\alpha$ and $w_\alpha u_{\alpha, \lambda} w_\alpha^{-1} = u_{-\alpha, \lambda}$. Moreover, $t_{-\alpha, \lambda} = t_{\alpha, \lambda^{-1}}$.
Importantly enough, $w_\alpha U_\beta w_\alpha^{-1} = U_{\sigma_\alpha(\beta)}$. Now to $\fg$.

\begin{notation}[realising $\fg$: $\ft, \fu_\alpha, \fg_\alpha, x_{\alpha, \lambda}, x_\alpha, h_{\alpha, \lambda}, h_\alpha$]
\label{n:realisation:ring}\
\begin{itemize}
\item
Fix a decomposition $\fg = \ft \oplus \oplus_{\alpha \in \Phi} \fu_\alpha$ with $\ft$ a Cartan subring and $\fu_\alpha$ the root subrings. Let $\fg_\alpha = \fg_{-\alpha} = \<\fu_\alpha, \fu_{-\alpha}\>$ be the root $\sl_2$-subring.
\item
Realising enables us to fix isomorphisms $\psi_\alpha : \sl_2(\K) \simeq \fg_\alpha$ mapping upper-triangular matrices to $\fg_\alpha$ and diagonal matrices to $\ft \cap \fg_\alpha$ and such that $\psi_\alpha^{-1}\circ \psi_{-\alpha} = \psi_{- \alpha}^{-1}\circ \psi_\alpha$ is the oppose-transpose automorphism.
Let:
\[x_{\alpha, \lambda} = \psi_\alpha\left(\begin{pmatrix} 0 & \lambda \\ 0 & 0\end{pmatrix} \right), \quad h_{\alpha, \lambda} = \psi_\alpha\left(\begin{pmatrix} \lambda \\  & - \lambda\end{pmatrix} \right)\]
For simplicity write $x_\alpha = u_{\alpha, 1}$ and $h_\alpha = h_{\alpha, 1}$.
\end{itemize}
\end{notation}


\begin{remark}\label{r:y}
If one were to let $y_{\alpha, \lambda} = \psi_\alpha\left(\begin{pmatrix} 0 & 0 \\ \lambda & 0\end{pmatrix} \right)$, one would have $y_{\alpha, \lambda} = - x_{-\alpha, \lambda} = x_{-\alpha, -\lambda}$. (The author finds computations less confusing to perform or check when working in the basis $(h, x, y)$.)
\end{remark}

Let us now provide uniform notations. The reason for choosing letter $\omega$ (which classically stands for the fundamental weights, see \cite{BLie78}) is by analogy with $w$ for elements of the group lifting the Weyl group. Checking that $\omega_\alpha$ behaves as expected in the case of the Lie ring too will be the object of Proposition \ref{p:localanalysis}.

\begin{notation}[realising $\cG$ by assembling Notations \ref{n:realisation:group} and \ref{n:realisation:ring}: $\cT, \cU_\alpha, \cG_\alpha, \omega_\alpha, \d_{\alpha, \lambda}, \tau_{\alpha, \lambda}$]
\label{n:realisation:uniform}\
\begin{itemize}
\item
If $\cG = G$ let $\cT = T$, let $\cU_\alpha = U_\alpha$ and $\cG_\alpha = G_\alpha$. Also let $\omega_\alpha = w_\alpha$; for $\lambda \in \K^\times$ let $\d_{\alpha, \lambda} = u_{\alpha, \lambda} - 1$ and $\tau_{\alpha, \lambda} = t_{\alpha, \lambda}$;
\item
if $\cG = \fg$ let $\cT = \ft$, let $\cU_\alpha = \fu_\alpha$ and $\cG_\alpha = \fg_\alpha$. Also let $\omega_\alpha = 1 - h_\alpha^2 + x_\alpha + x_{-\alpha}$; for $\lambda \in \K_+$ let $\d_{\alpha, \lambda} = x_{\alpha, \lambda}$ and $\tau_{\alpha, \lambda} = h_{\alpha, \lambda}$.
\end{itemize}
\end{notation}

Let us apologise again for this notation storm; on second thought, the reader will find that we mostly wanted to have toral and root elements, and to encode Weyl elements in a consistent way. The details of Notations \ref{n:realisation:group} and \ref{n:realisation:ring} may now be forgotten.
The first step is then just painting from nature.

\begin{proposition}[local analysis]\label{p:localanalysis}
For $\alpha \in \Phi$, one has $V = \Triv_V(\cG_\alpha) \oplus [\cG_\alpha, V]$ and $\lfloor\cG_\alpha, V\rfloor = \lfloor\cU_\alpha, V\rfloor \oplus \lfloor\cU_{-\alpha}, V\rfloor$ can be equipped with a $\K$-vector space structure making it isomorphic to a direct sum of natural representations of $\cG_\alpha$ as a $\K[\cG_\alpha]$-module.

Consequently:
\begin{itemize}
\item
in the case of the group, $C_V(G_\alpha) = C_V(i_\alpha)$ and $[G_\alpha, V] = [i_\alpha, V]$;
\item
$\omega_\alpha = \omega_{-\alpha}$ is a bijection fixing $\Triv_V(\cG_\alpha)$ pointwise and mapping $\lfloor\cU_\alpha, V\rfloor$ to $\lfloor\cU_{-\alpha}, V\rfloor$ and conversely; $\omega_\alpha^2$ acts as $-1$ on $\lfloor\cG_\alpha, V\rfloor$;
\item
for $v \in \lfloor\cU_\alpha, V\rfloor = \Triv_{\lfloor \cG_\alpha, V\rfloor}(\cU_\alpha)$, one has $\d_{\alpha, \lambda} \omega_\alpha v = - \tau_{\alpha, \lambda} v$;
\item
for $\alpha \in \Phi$, one has $\omega_\alpha \d_{\alpha, \lambda} \omega_\alpha^{-1} = \d_{-\alpha, \lambda}$; moreover $\omega_\alpha$ normalises the image of $\cT$ in $\End(V)$.
\end{itemize}
\end{proposition}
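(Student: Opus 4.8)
The plan is to treat Proposition \ref{p:localanalysis} as an essentially \emph{local} statement: everything happens inside the single root substructure $\cG_\alpha$, and the only external input is the Quadratic Theorem together with its Lie-ring analogue. Via the fixed isomorphism $\varphi_\alpha$ (resp.\ $\psi_\alpha$) I would regard $V$ as a module for $\SL_2(\K)$ (resp.\ $\sl_2(\K)$); the standing hypothesis that all roots act quadratically says exactly that $\cU_\alpha$ acts quadratically, so this is a quadratic module. Timmesfeld's Quadratic Theorem (group case) and its Lie-ring analogue (Lie case) then deliver verbatim the decomposition $V = \Triv_V(\cG_\alpha)\oplus\lfloor\cG_\alpha,V\rfloor$ together with a $\K$-structure on $\lfloor\cG_\alpha,V\rfloor$ turning it into a direct sum of copies of the natural module. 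The finer splitting $\lfloor\cG_\alpha,V\rfloor=\lfloor\cU_\alpha,V\rfloor\oplus\lfloor\cU_{-\alpha},V\rfloor$ is then the weight-line decomposition of the natural module: on a single copy, $\lfloor\cU_\alpha,V\rfloor$ is the image of the positive root operator, which equals its kernel since the nilpotent squares to zero, hence also equals $\Triv_{\lfloor\cG_\alpha,V\rfloor}(\cU_\alpha)$, while $\lfloor\cU_{-\alpha},V\rfloor$ is the opposite line.

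Once the natural-module structure is in hand, I would verify each of the ``consequently'' clauses by computing inside the two-dimensional natural representation and noting that everything is trivial on $\Triv_V(\cG_\alpha)$, so that each identity reduces to $2\times 2$ matrix arithmetic. The central involution $i_\alpha$ acts as $-\Id$ on every natural copy and as $\Id$ on the trivial part, so (here char $\neq 2$ enters) $C_V(i_\alpha)$ is exactly the trivial part $C_V(G_\alpha)$ while $(i_\alpha-1)V$ is exactly the natural part $[G_\alpha,V]$, which settles the group-case clause. For $\omega_\alpha$ I would first record $\omega_\alpha=\omega_{-\alpha}$ — clear from the defining formula, using $h_{-\alpha}=-h_\alpha$ in the Lie case — and that $\omega_\alpha$ is the identity on $\Triv_V(\cG_\alpha)$ and acts on each natural copy as $\left(\begin{smallmatrix}0&1\\-1&0\end{smallmatrix}\right)$. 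This already shows $\omega_\alpha$ is a bijection (its square is $-\Id$ on the natural part), that it interchanges the weight lines $\lfloor\cU_{\pm\alpha},V\rfloor$, and the relations $\d_{\alpha,\lambda}\omega_\alpha v = -\tau_{\alpha,\lambda}v$ and $\omega_\alpha\d_{\alpha,\lambda}\omega_\alpha^{-1}=\d_{-\alpha,\lambda}$ tested on a highest-weight vector. In the group case the latter is just the Steinberg identity $w_\alpha u_{\alpha,\lambda}w_\alpha^{-1}=u_{-\alpha,\lambda}$, and $\omega_\alpha=w_\alpha$ lies in the normaliser of $T$, so it normalises the image of $\cT$ for free.

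The real work, and the only genuinely non-routine point, is the Lie-ring case of the last clause, where $\omega_\alpha=1-h_\alpha^2+x_\alpha+x_{-\alpha}$ is not an element of $\fg$ but of its enveloping ring; this is the ``slight trick'' promised in the introduction. Here I can neither invoke membership in a normaliser nor argue summand-by-summand, because a toral operator $h_\beta$ with $\beta\neq\pm\alpha$ need not preserve the $\cG_\alpha$-isotypic decomposition. Instead I would proceed algebraically: from the module relations $[h_\beta,x_{\pm\alpha}]=\pm\langle\alpha,\beta^\vee\rangle x_{\pm\alpha}$ and $[h_\beta,h_\alpha]=0$ one computes $[\omega_\alpha,h_\beta]=\langle\alpha,\beta^\vee\rangle(x_{-\alpha}-x_\alpha)$, which reduces the whole clause to the single operator identity $x_{-\alpha}-x_\alpha=-h_\alpha\omega_\alpha$ on $V$. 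This last identity involves only $\cG_\alpha$-operators, hence \emph{is} checkable on each natural copy and on the trivial part, where it is immediate; combining the two yields $\omega_\alpha h_\beta\omega_\alpha^{-1}=h_\beta-\langle\alpha,\beta^\vee\rangle h_\alpha=h_{\sigma_\alpha(\beta)}\in\im\cT$, proving normalisation. The expected difficulty is thus entirely concentrated in checking that the concocted element $\omega_\alpha$ reproduces, at the level of operators on $V$, the conjugation behaviour of a genuine Weyl-group representative — invertibility, interchange of weight lines, and the reflection action on the torus — without ever leaving the enveloping ring.
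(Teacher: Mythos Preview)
Your proposal is correct and follows essentially the same route as the paper: invoke the Quadratic Theorem (and its Lie-ring analogue) for the decomposition and the $\K$-structure, then read off the ``consequently'' clauses by inspection in $\Nat\bSL_2$, with the only genuine work being the Lie-ring normalisation of $\cT$, handled via the commutator $\llbracket\omega_\alpha,h_\beta\rrbracket$ and a residual $\cG_\alpha$-local identity. The paper computes $(x_{-\alpha,\lambda}-x_{\alpha,\lambda})\omega_\alpha^{-1}=-h_{\alpha,\lambda}$ by explicit manipulation in $\End(V)$, whereas you phrase the equivalent identity as $x_{-\alpha}-x_\alpha=-h_\alpha\omega_\alpha$ and check it piecewise on the trivial and natural summands; these are the same computation up to repackaging. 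One small point: your normalisation argument is written only for $h_\beta=h_{\beta,1}$, but the image of $\ft$ consists of all $h_{\beta,\lambda}$, so you should note (as the paper does) that the identical computation goes through with the parameter $\lambda$ inserted, yielding $\omega_\alpha h_{\beta,\lambda}\omega_\alpha^{-1}=h_{\beta,\lambda}-h_{\alpha,\langle\alpha,\beta^\vee\rangle\lambda}$.
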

\begin{proof}
By assumption $\cG_\alpha$ acts quadratically. So most claims follow from Timmesfeld's Quadratic Theorem and its Lie-ring analogue from the introduction, and inspection in the natural $\bSL_2$-module (possibly with a few computations).

We urge the reader not to overlook the fact that $\d_{\alpha, \lambda} \omega_\alpha v = -\tau_{\alpha, \lambda} v$ for $v \in \lfloor \cU_\alpha, V\rfloor$: here we can see it by inspection again, but this rather deep equation is the crux of the Quadratic Theorem. (The curious reader interested in extending this remarkable formula to other rational representations may be directed to the proof of \cite[Theorem 2]{TV-III}.)

We now prove the final statement, and begin with $\omega_\alpha \d_{\alpha, \lambda} \omega_\alpha^{-1} = \d_{-\alpha, \lambda}$.
This is clear in the case of the group (the formula holds in $G_\alpha$, replacing $\d$ by $u$); for the Lie ring, proceed piecewise. On $\Ann_V(\fg_\alpha)$ this is clear as both hands are zero. So let us work on $\fg_\alpha \cdot V$, where $h_\alpha^2 = 1$, so that $\omega_\alpha$ simplifies into $x_\alpha + x_{-\alpha}$. Let $\llbracket f, g\rrbracket = fg-gf$ in $\End(V)$ (we avoid $[\cdot, \cdot]$ which we reserve for group commutators).
Then using quadraticity of $\fg_\alpha$ and with a possible look at Remark \ref{r:y} one can check:
\begin{align*}
\omega_\alpha \d_{\alpha, \lambda} \omega_{\alpha}^{-1} &
= - (x_\alpha + x_{-\alpha}) x_{\alpha, \lambda} (x_\alpha + x_{-\alpha}) = - x_{-\alpha} x_{\alpha, \lambda}  x_{-\alpha}\\
&  = - \llbracket x_{-\alpha}, x_{\alpha, \lambda}\rrbracket x_{-\alpha}
= - h_{\alpha, \lambda} x_{-\alpha}
 = - \llbracket x_{-\alpha, \lambda}, x_\alpha\rrbracket x_{-\alpha}\\
& = - x_{-\alpha, \lambda} \llbracket x_\alpha, x_{-\alpha}\rrbracket
= x_{-\alpha, \lambda} h_\alpha
 = x_{-\alpha, \lambda}
 = \d_{-\alpha, \lambda}
\end{align*}

We now show that $\omega_\alpha$ normalises the image in $\End(V)$ of $\cT$: here again the case of the group is obvious so we turn to the Lie ring.
Let $\alpha, \beta$ be roots. Then:
\begin{itemize}
\item
first, $\llbracket \omega_\alpha, \tau_{\beta, \lambda}\rrbracket = \llbracket x_\alpha + x_{-\alpha}, h_{\beta, \lambda}\rrbracket = x_{ - \alpha, \<\alpha, \beta^\vee\> \lambda} - x_{\alpha, \<\alpha, \beta^\vee\> \lambda}$;
\item
by piecewise inspection one has $x_{-\alpha, \lambda} x_\alpha - x_{\alpha, \lambda} x_{-\alpha} = h_{\alpha, \lambda}$ and $x_{\alpha, \lambda} h_\alpha = - x_{\alpha, \lambda}$ in $\End(V)$; also notice that $\omega_\alpha^{-1} = - x_\alpha - x_{-\alpha} + 1 - h_\alpha^2$;
\item
therefore $(x_{- \alpha, \lambda} - x_{\alpha, \lambda}) \omega_\alpha^{-1} = (x_{ -\alpha, \lambda} - x_{\alpha, \lambda}) (- x_\alpha - x_{-\alpha}) = - x_{-\alpha, \lambda} x_\alpha + x_{\alpha, \lambda} x_{-\alpha} = - h_{\alpha, \lambda}$;
\item
as a consequence $\omega_\alpha \tau_{\beta, \lambda} \omega_\alpha^{-1} = \left(\llbracket \omega_\alpha, \tau_{\beta, \lambda} \rrbracket + \tau_{\beta, \lambda} \omega_\alpha\right) \omega_\alpha^{-1} = (x_{- \alpha, \<\alpha, \beta^\vee\> \lambda} - x_{\alpha, \<\alpha, \beta^\vee\> \lambda}) \omega_\alpha^{-1} + h_{\beta, \lambda} = - h_{\alpha, \<\alpha, \beta^\vee\> \lambda} + h_{\beta, \lambda}$.
\end{itemize}
Hence $\omega_\alpha$ normalises the image of $\ft$.
\end{proof}

Notice that if $\cG = G$, then since $\bG$ is simple as an algebraic group and the action is non-trivial, for any $\alpha \in \Phi$ the root $\bSL_2$-subgroup $G_\alpha$ must act non-trivially on $V$. As a consequence $G_\alpha \simeq \SL_2(\K)$ and $i_\alpha$ is a genuine involution. (Without simplicity, one would just factor out and use the root datum of the quotient.)

\subsection{Spots and Masses}

Capturing weight spaces requires a little care in the absence of a linear structure.

In the case of the Lie ring $\cG = \fg$ there is a straightforward approach. Diagonalise all operators $h_\alpha$ ($\alpha \in \Phi_s$) \emph{simultaneously}; the various eigenspaces will be the weight spaces for the action of $\ft$. But there is no such argument in the case of the group $\cG = G$. Yet returning to the Lie ring one sees by inspection that $\ker(h_\alpha - 1) = \fu_\alpha \cdot V = \lfloor \cU_\alpha, V\rfloor$ whereas $\ker (h_\alpha) = \Ann_V(\fg_\alpha) = \Triv_V(\cG_\alpha)$.
This suggests a general method.

Recall from Notation \ref{n:rootdatum} that $\Phi$ (resp. $\Phi^\vee$) denotes the root (resp. dual root) system and $E$ the underlying Euclidean space. We had also let $\<\cdot, \cdot\> : L \times L^\vee \to \Z$ denote the pairing.

\begin{definition}\label{d:spotsandmasses}\
\begin{itemize}
\item
For $\mu \in E$ and $\alpha$ a root define $V_{(\mu, \alpha^\vee)}$ as follows:
\begin{itemize}
\item
if $\<\mu, \alpha^\vee\> = -1$ let $V_{(\mu, \alpha^\vee)} = \lfloor \cU_{-\alpha}, V\rfloor$;
\item
if $\<\mu, \alpha^\vee\> = 0$ let $V_{(\mu, \alpha^\vee)} = \Triv_V(\cG_\alpha)$;
\item
if $\<\mu, \alpha^\vee\> = 1$ let $V_{(\mu, \alpha^\vee)} = \lfloor \cU_\alpha, V\rfloor$;
\item
if $\<\mu, \alpha^\vee\> \notin\{-1, 0, 1\}$ let $V_{(\mu, \alpha^\vee)} = \{0\}$.
\end{itemize}
\item
For $\mu \in E$ let $S_\mu = \bigcap_{\alpha \in \Phi_s} V_{(\mu, \alpha^\vee)}$ be the \emph{spot with mass $\mu$}.
\item
Let $M = \{\mu \in E: S_\mu \neq \{0\}\}$ be the set of \emph{masses}. (Being a mass certainly implies: $\forall \alpha \in \Phi_s, \<\mu, \alpha^\vee\> \in \{-1, 0, 1\}$, but the condition is not sufficient.)
\end{itemize}
\end{definition}

Notice how in the presence of a suitable field action, a mass $\mu$ will become a weight.
The idea in taking the intersection over the set of \emph{simple} roots $\Phi_s$ is that the behaviour of simple roots determines that of all roots; we shall not need nor prove this.

\begin{remark}\label{r:spots}\
\begin{enumerate}
\item
$S_\mu$ is a $\cT$-submodule of $V$.
\item
Suppose $\cG = \fg$ and $\<\mu, \alpha^\vee\> \in \{-1, 0, 1\}$. Then $V_{(\mu, \alpha^\vee)} = \ker(h_\alpha - \<\mu, \alpha^\vee\>)$.
\end{enumerate}
\end{remark}

We now show that $V$ is the direct sum of its spots.

\begin{proposition}\label{p:sumofspots}
$V = \oplus_{\mu \in M} S_\mu$.
\end{proposition}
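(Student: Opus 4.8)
The plan is to reduce everything to the compatibility of the $|\Phi_s|$ local trichotomies supplied by Proposition~\ref{p:localanalysis}. For each simple root $\alpha$ that proposition decomposes $V$ into the three pieces $V_{(\mu,\alpha^\vee)}$: writing $V^\alpha_{-1}=\lfloor\cU_{-\alpha},V\rfloor$, $V^\alpha_0=\Triv_V(\cG_\alpha)$ and $V^\alpha_{+1}=\lfloor\cU_\alpha,V\rfloor$, one has $V=V^\alpha_{-1}\oplus V^\alpha_0\oplus V^\alpha_{+1}$. Since the simple coroots form a basis of the space dual to $E$, a mass $\mu$ is determined by the tuple $(\langle\mu,\alpha^\vee\rangle)_{\alpha\in\Phi_s}\in\{-1,0,1\}^{\Phi_s}$ and $S_\mu=\bigcap_{\alpha\in\Phi_s}V^\alpha_{\langle\mu,\alpha^\vee\rangle}$; hence the assertion $V=\bigoplus_{\mu\in M}S_\mu$ is exactly the claim that these $|\Phi_s|$ direct-sum decompositions admit a common refinement, i.e.\ that their associated projections pairwise commute. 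The routine iterated-refinement lemma (commuting endomorphisms each splitting $V$ into eigenpieces produce a joint decomposition by intersection) then closes the argument.

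In the Lie-ring case this is immediate and needs no division. The operators $h_\alpha$ ($\alpha\in\Phi_s$) lie in the abelian Cartan $\ft$, so they commute as endomorphisms of $V$, and by Remark~\ref{r:spots} each realises its trichotomy as $V_{(\mu,\alpha^\vee)}=\ker(h_\alpha-\langle\mu,\alpha^\vee\rangle)$. Because $h_\beta$ preserves every $\ker(h_\alpha-c)$, the $\beta$-decomposition restricts to each $\alpha$-piece, and iterating over $\Phi_s$ yields $V=\bigoplus_\mu S_\mu$ directly.

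For the group I would split the work into a \emph{coarse} and a \emph{fine} stage, following the hint that central involutions (hence the hypothesis on the characteristic) are the essential tool. The coarse stage is clean: the involutions $i_\alpha\in T$ commute, each $V^\alpha_c$ is $T$-invariant, and $i_\alpha$ fixes $V^\alpha_0$ while negating $V^\alpha_{\pm1}=[i_\alpha,V]$; so the commuting involutions cut $V$ into a compatible $(\Z/2)^{\Phi_s}$-grading $V=\bigoplus_\epsilon V^\epsilon$, on each piece of which a simple root $\alpha$ is either inactive ($\epsilon_\alpha=+1$, with $G_\alpha$ acting trivially) or active ($\epsilon_\alpha=-1$, with $V^\epsilon\subseteq[G_\alpha,V]$). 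The fine stage must then separate, for each active $\alpha$, the weight $+1$ from the weight $-1$ inside $V^\epsilon$; for this I would use a torus element $t_{\alpha,\lambda}$ with $\lambda^2\neq1$ (available since $|\K|>3$), which by Proposition~\ref{p:localanalysis} acts through the $\K$-structure on $[G_\alpha,V]$ with the two distinct eigenvalues $\lambda^{\pm1}$, the eigenspaces being precisely $V^\alpha_{\pm1}$. What makes this usable is that the whole of $T$ centralises $\alpha^\vee(\K^\times)=T\cap G_\alpha$ and therefore acts $\K$-linearly on $[G_\alpha,V]$; consequently each $V^\epsilon$ with $\alpha$ active is a $\K$-subspace of $[G_\alpha,V]$ on which the commuting family $\{t_{\beta,\lambda}\}_\beta$ is $\K$-linear, so the rank-one spectral projection of $t_{\alpha,\lambda}$ restricts to it.

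The main obstacle is exactly the coherence of these per-root $\K$-structures. Involutions alone cannot finish the job: $i_\gamma$ only records $\langle\mu,\gamma^\vee\rangle\bmod 2$ for every root $\gamma$, and since the simple coroots are a $\Z$-basis of the coroot lattice, a single $+1\leftrightarrow-1$ flip alters all these parities by an even amount, so no product of involutions ever distinguishes $V^\alpha_{+1}$ from $V^\alpha_{-1}$. Thus the fine separation genuinely depends on the field action, and the crux is to verify that the structures coming from distinct root $\SL_2$-subgroups are mutually compatible, so that the fine splittings for different simple roots commute; this is where the single Chevalley realisation over one field $\K$, together with the normalising isomorphisms $\varphi_\alpha$, must be exploited, and I expect it to be the only delicate point. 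Directness of $\sum_\mu S_\mu$ is by contrast robust and I would establish it separately by a minimal-relation argument: applying $\tfrac12(1\pm i_\alpha)$ separates masses of differing $\langle\cdot,\alpha^\vee\rangle$-parity, after which all surviving terms lie in the single space $[G_\alpha,V]$, whose $\K$-structure lets the spectral projection of one $t_{\alpha,\lambda}$ peel off the weight $+1$ part, contradicting minimality.
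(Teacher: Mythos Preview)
Your treatment of the Lie-ring case and of directness is essentially the paper's, and your coarse stage via the commuting involutions $i_\alpha$ is correct. The gap is precisely where you locate it, and you have not closed it.

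You propose to separate $[U_\alpha,V]$ from $[U_{-\alpha},V]$ inside each $V^\epsilon$ by the $\K_\alpha$-spectral projection of $t_{\alpha,\lambda}$, and for the joint refinement you need these projections to commute across different active simple roots. It is true that each $t_{\beta,\lambda'}$ preserves $[U_{\pm\alpha},V]$ (it normalises $U_{\pm\alpha}$), but the projection $P_\beta$ is not $t_{\beta,\lambda'}$ alone: it is a $\K_\beta$-linear combination of $t_{\beta,\lambda'}$ and the identity, and $\K_\beta$-scalar multiplication is defined \emph{piecewise} --- it is $t_{\beta,\mu}$ on $[U_\beta,V]$ but $t_{\beta,\mu^{-1}}$ on $[U_{-\beta},V]$. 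So asking whether $P_\beta$ preserves $[U_\alpha,V]\cap V^\epsilon$ amounts to asking whether the $\beta$-decomposition $v=v_+^\beta+v_-^\beta$ of an element $v\in[U_\alpha,V]$ already has $v_\pm^\beta\in[U_\alpha,V]$, which is exactly the commuting-refinement statement you are trying to prove. The ``single Chevalley realisation over one field $\K$'' does not rescue this: it governs relations \emph{inside} $G$, whereas compatibility of the local $\K$-structures \emph{on $V$} is the content of the whole theorem and is only established later (Proposition~\ref{p:linear}), well after the present proposition.

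The paper circumvents this circularity by abandoning torus spectral projections. For the fine separation it uses the $\Z$-linear operator $-\d_\alpha\omega_\alpha$, which projects $V$ onto $[U_\alpha,V]$ killing both $[U_{-\alpha},V]$ and $C_V(G_\alpha)$; no field structure is required to define it (this is also what makes the paper's directness argument cleaner than yours). But these projections still do not obviously commute for adjacent simple roots, and the paper does not attempt to show they do. Instead it runs an induction along the Dynkin diagram: pick an extremal simple root $\alpha$ not longer than its unique neighbour $\beta$, assume $v$ already decomposed under every $G_\gamma$ with $\gamma\neq\alpha$, split off the $i_\alpha$-fixed part, and then analyse by hand the three bond types $\alpha\simple\beta$, $\alpha\doubleleq\beta$, $\alpha\tripleleq\beta$. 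In each case a short computation with the involutions $i_{\alpha+\beta}$, $i_{2\alpha+\beta}$, etc.\ and conjugation by Weyl elements shows either that the $\alpha$-fine pieces $v_\pm$ remain decomposed under $G_\beta$, or that one of them is forced to vanish. This case division is the substance of the proof and is not bypassed by your outline.
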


\begin{remark}\label{r:G2trivial}
As a matter of fact during the proof we shall see that if $\cG = G$ and $\bG = G_2$, then $V = S_0$ (with $0$ the null mass).
\end{remark}

\begin{proof}
In the case of the Lie ring $\fg$ this is obvious since by Proposition \ref{p:localanalysis}, the operators $h_\alpha$ are simultaneously diagonalisable, with eigenvalues $\{-1, 0, 1\}$.  We then focus on the case of the group $G$; nothing so quick is available, since no toral element in $G_\alpha$ suffices to determine the value of $\<\mu, \alpha^\vee\>$: looking at the involution can distinguish $0$ from $\pm 1$, but no further. We need a closer look.

Bear in mind from Proposition \ref{p:localanalysis} that for any $\alpha \in \Phi$, one has $C_V(i_\alpha) = C_V(G_\alpha)$ and $[i_\alpha, V] = [G_\alpha, V]$; also 
$[U_\alpha, V] = C_{[G_\alpha, V]}(U_\alpha)$. Finally if $v \in [U_\alpha, V]$, then $\d_\alpha w_\alpha v = - v$.

\begin{claim}
The sum is direct.
\end{claim}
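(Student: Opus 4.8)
The plan is to deduce directness from the three-term local decompositions supplied by Proposition \ref{p:localanalysis}, one for each simple root. Fix $\alpha \in \Phi_s$. The local analysis gives
\[
V = [U_{-\alpha}, V] \oplus C_V(G_\alpha) \oplus [U_\alpha, V],
\]
and by Definition \ref{d:spotsandmasses} these three summands are exactly the spaces $V_{(\mu,\alpha^\vee)}$ attached to $\langle\mu,\alpha^\vee\rangle = -1, 0, 1$ respectively. I would write $V_\alpha^{c}$ for the summand indexed by $c \in \{-1,0,1\}$ and let $\pi_\alpha^{c}\colon V \to V_\alpha^{c}$ be the corresponding projection, so that $\pi_\alpha^{c}$ is the identity on $V_\alpha^{c}$ and kills the other two summands. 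Since $S_\mu \subseteq V_{(\mu,\alpha^\vee)} = V_\alpha^{\langle\mu,\alpha^\vee\rangle}$ for every mass $\mu$, the projection $\pi_\alpha^{c}$ restricts to the identity on $S_\mu$ when $\langle\mu,\alpha^\vee\rangle = c$ and annihilates $S_\mu$ otherwise.

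The separating input is purely combinatorial. Because $\bG$ is simple, the simple coroots $\alpha^\vee$ ($\alpha \in \Phi_s$) form a basis of $E^\vee = \R\otimes_\Z\Z\Phi^\vee$, which pairs perfectly with $E$; hence the linear map $\mu \mapsto (\langle\mu,\alpha^\vee\rangle)_{\alpha\in\Phi_s}$ is injective on $E$. Consequently two distinct masses $\mu \neq \mu'$ differ in at least one simple coordinate, i.e. $\langle\mu,\alpha^\vee\rangle \neq \langle\mu',\alpha^\vee\rangle$ for some $\alpha \in \Phi_s$: the tuples of simple-coroot values separate masses.

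Now suppose $\sum_{\mu \in M} v_\mu = 0$ with $v_\mu \in S_\mu$ and only finitely many terms nonzero; I want all $v_\mu = 0$. Applying $\pi_\alpha^{c}$ to this relation and using the first paragraph yields $\sum_{\mu : \langle\mu,\alpha^\vee\rangle = c} v_\mu = 0$ for each $\alpha \in \Phi_s$ and each $c \in \{-1,0,1\}$. Enumerating $\Phi_s = \{\alpha_1,\dots,\alpha_n\}$ and iterating, an easy induction on $k$ shows that for any prescribed values $c_1,\dots,c_k$ the partial sum over masses with $\langle\mu,\alpha_i^\vee\rangle = c_i$ for $i \le k$ again vanishes: indeed each surviving term still lies in one of the three summands for $\alpha_{k+1}$, so the next projection $\pi_{\alpha_{k+1}}^{c_{k+1}}$ applies verbatim. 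Taking $k = n$ fixes the entire tuple $(\langle\mu,\alpha^\vee\rangle)_{\alpha\in\Phi_s}$, and by the injectivity of the previous paragraph each resulting index set is a single mass; therefore every $v_\mu = 0$, which is the asserted directness.

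The only genuine ingredient beyond Proposition \ref{p:localanalysis} is the injectivity separating masses, and that is immediate from the simplicity of $\bG$; everything else is bookkeeping with the three-term decompositions, and the argument is in fact uniform across $\cG = G$ and $\cG = \fg$. I therefore do not expect a real obstacle in this claim. The substance of Proposition \ref{p:sumofspots} should lie rather in the complementary statement that the spots \emph{span} $V$ (as foreshadowed by Remark \ref{r:G2trivial} for $\bG = G_2$), where the root involutions $i_\alpha$ and the relation $\d_\alpha \omega_\alpha v = -v$ on $[U_\alpha,V]$ recorded before the claim will be needed.
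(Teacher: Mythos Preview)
Your argument is correct and follows the same underlying idea as the paper: separate masses by their values $\langle\mu,\alpha^\vee\rangle$ using the local three-term decomposition of Proposition~\ref{p:localanalysis}, then invoke that simple coroots span $E^\vee$. The only difference is packaging: the paper runs a minimal-counterexample argument and names the relevant projections explicitly via the operators $i_\alpha$ (to isolate $\langle\mu,\alpha^\vee\rangle=0$) and $\d_\alpha w_\alpha$ (to isolate $\langle\mu,\alpha^\vee\rangle=\pm 1$), handling the Lie-ring case separately by simultaneous diagonalisation of the $h_\alpha$, whereas your abstract-projection version is uniform in $\cG$.
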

\begin{proofclaim}
Let $\sum_{\mu \in M_1} v_\mu = 0$ be an identity minimal with respect to: for all $\mu \in M_1$, $v_\mu \in S_\mu\setminus\{0\}$. Let $\nu \in M_1$ (if any) and $\alpha \in \Phi_s$ be fixed.
\begin{itemize}
\item
If $\<\nu, \alpha^\vee\> = 0$ then $i_\alpha v_\nu = v_\nu$, so that $\sum_{\mu \in M_1} (i_\alpha v_\mu - v_\mu) = 0$ is a shorter relation. It follows that $i_\alpha v_\mu = v_\mu$ for all $\mu \in M_1$, meaning $\<\mu, \alpha^\vee\> = 0$.
\item
If $\<\nu, \alpha^\vee\> = 1$ then $\d_\alpha w_\alpha v_\nu = -v_\nu$; notice that whenever $\<\mu, \alpha^\vee\> \neq 1$, one has $\d_\alpha w_\alpha v_\mu = 0$. So minimality again forces $\<\mu, \alpha^\vee\> = 1$ for all $\mu \in M_1$.
\item
There is a similar argument if $\<\nu, \alpha^\vee\> = -1$.
\end{itemize}
This shows that all $\mu \in M_1$ coincide on all $\alpha \in \Phi_s$, a spanning set of $E$: $M_1$ is at most a singleton, hence empty, as desired.
\end{proofclaim}

Now let $R(V) = \oplus_{\mu \in M} S_\mu$ and fix $v \in V$; we aim at showing $v \in R(V)$.

Let $\alpha \in \Phi_s$ be extremal in the Dynkin diagram and $\beta$ be its neighbour; \emph{we may suppose $\alpha$ not to be longer than $\beta$}. By induction, we may assume that for any $\gamma \in \Phi_s\setminus\{\alpha\}$, the element $v$ is \emph{decomposed} under the action of $G_\gamma$, viz.:
\[v \in C_V(i_\gamma) \cup [U_\gamma, V] \cup [U_{-\gamma}, V]\]

\begin{claim}
We may suppose $v \in [i_\alpha, V]$.
\end{claim}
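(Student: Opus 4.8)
The plan is to split $v$ with the involution $i_\alpha$ and to check that this split respects the inductive hypothesis. Since the characteristic is not $2$ and $i_\alpha$ is a genuine involution, Proposition \ref{p:localanalysis} gives $V = C_V(i_\alpha) \oplus [i_\alpha, V]$, with the two projections realised explicitly as $\frac{1}{2}(1 \pm i_\alpha)$. I would write $v = v^0 + v^1$ accordingly, with $v^0 \in C_V(i_\alpha) = C_V(G_\alpha)$ and $v^1 \in [i_\alpha, V] = [G_\alpha, V]$, aiming to show that $v^0$ already lies in $R(V)$ while $v^1$ inherits the inductive hypothesis.

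The key point I would establish first is that $i_\alpha$ preserves each of the three pieces $C_V(i_\gamma)$, $[U_\gamma, V]$, $[U_{-\gamma}, V]$ attached to any $\gamma \in \Phi_s\setminus\{\alpha\}$. Everything here rests on $i_\alpha = \alpha^\vee(-1)$ being toral: lying in $T$, it commutes with every $i_\gamma$ and normalises every root subgroup $U_{\pm\gamma}$. Commuting with $i_\gamma$ shows it preserves both $C_V(i_\gamma)$ and $[i_\gamma, V] = [G_\gamma, V]$; normalising $U_\gamma$ then shows it preserves the $U_\gamma$-fixed points inside $[G_\gamma, V]$, which by Proposition \ref{p:localanalysis} are exactly $[U_\gamma, V] = C_{[G_\gamma, V]}(U_\gamma)$ (and symmetrically for $-\gamma$). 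Hence both projections $\frac{1}{2}(1 \pm i_\alpha)$ map each piece into itself.

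Granting this, the inductive hypothesis places $v$ in one of the three $G_\gamma$-pieces for each $\gamma \neq \alpha$, so the same holds for $v^0$ and $v^1$: both remain decomposed under every $G_\gamma$ with $\gamma \neq \alpha$. But $v^0 \in C_V(i_\alpha)$ is by definition the piece $V_{(\mu,\alpha^\vee)}$ with $\<\mu,\alpha^\vee\> = 0$, so $v^0$ is in fact decomposed under \emph{all} simple roots. Since the simple coroots are linearly independent, the prescribed values $\<\mu,\gamma^\vee\> \in \{-1,0,1\}$ over $\gamma \in \Phi_s$ pin down a unique $\mu \in E$, and $v^0 \in \bigcap_{\gamma\in\Phi_s} V_{(\mu,\gamma^\vee)} = S_\mu \subseteq R(V)$ (with $\mu$ a mass as soon as $v^0 \neq 0$). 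It therefore suffices to prove $v^1 \in R(V)$, which is precisely the reduction to $v \in [i_\alpha, V]$.

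I expect this claim to be routine, the whole argument being driven by $i_\alpha$ being toral, hence commuting with the other central involutions and normalising the other root subgroups. The real difficulty is deferred to the following step, where one must further split the remaining $v^1 \in [i_\alpha, V]$ between $[U_\alpha, V]$ and $[U_{-\alpha}, V]$: there no single involution does the job, and this is presumably where the standing assumptions that $\alpha$ be extremal in the Dynkin diagram and no longer than its neighbour $\beta$ will come into play.
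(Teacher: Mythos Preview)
Your proof is correct and follows essentially the same route as the paper: split $v$ via the involution $i_\alpha$, verify that both components remain decomposed under every $G_\gamma$ with $\gamma \neq \alpha$ because $i_\alpha$ preserves each of the three pieces, and conclude that the centralised component already lies in $R(V)$. The only cosmetic difference is that the paper handles the unique neighbour $\beta$ separately from the non-adjacent simple roots (for which $i_\alpha$ centralises all of $G_\gamma$, not merely normalises $U_{\pm\gamma}$), whereas your observation that $i_\alpha \in T$ normalises every root subgroup treats all $\gamma \neq \alpha$ uniformly.
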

\begin{proofclaim}
Write $v = v_0 + v_\pm$ with respect to the action of $i_\alpha$, meaning $v_0 \in C_V(i_\alpha) = C_V(G_\alpha)$ and $v_\pm \in [i_\alpha, V] = [G_\alpha, V]$; as a matter of fact $v_\pm = \frac{1}{2} [i_\alpha, v]$ (which makes sense since $[i_\alpha, V]$ is a vector space over $\K$).
Since $i_\alpha$ centralises $G_\gamma$ for $\gamma \in \Phi_s\setminus\{\alpha, \beta\}$, $v_0$ remains decomposed under the action of such root $\SL_2$-subgroups; by construction, it is decomposed under that of $G_\alpha$.
Now $i_\alpha$ normalises $U_\beta$ and $U_{-\beta}$ (hence also $G_\beta$). As a consequence:
\begin{itemize}
\item
if $v \in C_V(G_\beta)$ then $i_\alpha v$, $v_\pm$, and $v_0$ lie in $C_V(G_\beta)$;
\item
if $v \in [U_\beta, V]$ then $i_\alpha v$, $v_\pm$, and $v_0$ lie in $[U_\beta, V]$;
\item
there is a similar argument if $v \in [U_{-\beta}, V]$.
\end{itemize}
%
%
%
As a conclusion, $v_0$ is decomposed under the action of $G_\beta$ as well: hence $v_0 \in R(V)$. We may therefore assume $v = v_\pm \in [i_\alpha, V]$.
\end{proofclaim}

It follows from inspection in $\Nat \SL_2(\K)$ that $v = v_+ + v_-$ with $v_+ = - \d_\alpha w_\alpha v \in [U_\alpha, V]$ and $v_- = - w_\alpha \d_\alpha v \in [U_{-\alpha}, V]$.
We aim at showing $v_+, v_- \in R(V)$. By construction the latter elements are already decomposed under the action of $G_\alpha$ and $G_\gamma$ for $\gamma \in \Phi_s\setminus\{\alpha, \beta\}$, but it remains to see what happened under the action of $G_\beta$.
This we do dividing three cases (remember that we assumed $\alpha$ not to be longer than $\beta$). We use classical notations for Dynkin diagrams: $\simple$, $=$, and $\equiv$; an arrow goes from a long root to a short root.

\begin{claim}
If $\alpha \simple \beta$, then $v \in R(V)$.
\end{claim}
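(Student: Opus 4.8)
The plan is to show that both $v_+$ and $v_-$ are decomposed under $G_\beta$; since they are already decomposed under $G_\alpha$ and under every $G_\gamma$ with $\gamma\in\Phi_s\setminus\{\alpha,\beta\}$ (as noted just before the claim, $\alpha$ being extremal), this will place each of them in a single spot, hence in $R(V)$, and we will be done. The whole argument takes place in the rank-$2$ subsystem spanned by $\alpha$ and $\beta$, which for a single bond is of type $A_2$: its positive roots are $\alpha,\beta,\alpha+\beta$, and the single-bond hypothesis supplies the three facts I will exploit, namely $\sigma_\alpha(\beta)=\alpha+\beta$, the absence of the root $2\alpha+\beta$ (so that $[U_\alpha,U_{\alpha+\beta}]=1$), and $(\alpha+\beta)^\vee=\alpha^\vee+\beta^\vee$ (so that $i_{\alpha+\beta}=i_\alpha i_\beta$).

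First I would bring in the toral involution $i_\beta$. Being toral it normalises $U_\alpha$ and $U_{-\alpha}$, hence preserves $[U_\alpha,V]$ and $[U_{-\alpha},V]$. Writing $i_\beta v=\epsilon v$, with $\epsilon=1$ if $v\in C_V(G_\beta)$ and $\epsilon=-1$ if $v\in[U_{\pm\beta},V]$, and using the direct decomposition $[G_\alpha,V]=[U_\alpha,V]\oplus[U_{-\alpha},V]$ from Proposition \ref{p:localanalysis}, I get $i_\beta v_+=\epsilon v_+$ and $i_\beta v_-=\epsilon v_-$. If $\epsilon=1$ this already gives $v_\pm\in C_V(i_\beta)=C_V(G_\beta)$, so both are decomposed under $\beta$ and the claim holds.

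The crux is the case $\epsilon=-1$, say $v\in[U_\beta,V]$ (the case $[U_{-\beta},V]$ being symmetric, killing $v_-$ instead of $v_+$). Here the involution alone is useless, since it cannot separate $[U_\beta,V]$ from $[U_{-\beta},V]$. The idea is to feed in the third root $\alpha+\beta$. Since $w_\alpha U_\beta w_\alpha^{-1}=U_{\sigma_\alpha(\beta)}=U_{\alpha+\beta}$, we have $w_\alpha v\in[U_{\alpha+\beta},V]$; and because $[U_\alpha,U_{\alpha+\beta}]=1$ the operator $\d_\alpha=u_\alpha-1$ preserves $[U_{\alpha+\beta},V]$, so $v_+=-\d_\alpha w_\alpha v\in[U_{\alpha+\beta},V]$ as well. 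Thus $v_+$ lies in both $[U_\alpha,V]$ and $[U_{\alpha+\beta},V]$, so both $i_\alpha$ and $i_{\alpha+\beta}$ act on it as $-1$; using $i_{\alpha+\beta}=i_\alpha i_\beta$ this forces $i_\beta v_+=i_\alpha i_{\alpha+\beta}v_+=v_+$, against the value $i_\beta v_+=-v_+$ found above. As $\K$ has characteristic $\neq 2$ I conclude $v_+=0$, whence $v=v_-\in[U_\beta,V]\cap[U_{-\alpha},V]$ is decomposed under $\beta$.

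The main obstacle is precisely this last case: the central involutions only record the parities of $\langle\mu,\gamma^\vee\rangle$, so they cannot distinguish the two nonzero $\beta$-values, and the abstract group offers no toral element of finer order. The resolution---importing $\alpha+\beta$ and the relation $i_{\alpha+\beta}=i_\alpha i_\beta$---is exactly where the single-bond shape of the edge is used, and it would have to be replaced by a different computation for a double or triple bond. (Only $\cG=G$ is at stake here, the Lie-ring half of Proposition \ref{p:sumofspots} being immediate from simultaneous diagonalisation of the $h_\alpha$.)
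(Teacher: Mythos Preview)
Your proof is correct. The first case ($v\in C_V(G_\beta)$) is argued just as in the paper, only phrased via the direct sum $[G_\alpha,V]=[U_\alpha,V]\oplus[U_{-\alpha},V]$ rather than by the explicit identity $i_\beta\,\d_\alpha w_\alpha v=-\d_\alpha w_\alpha\,i_\alpha i_\beta v$; these are equivalent.

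The case $v\in[U_\beta,V]$ is where you take a slightly different route. The paper observes that $i_{\alpha+\beta}=i_\alpha i_\beta$ centralises $v$ itself (both $i_\alpha$ and $i_\beta$ invert it), hence $v=w_{\alpha+\beta}v\in[w_{\alpha+\beta}U_\beta w_{\alpha+\beta}^{-1},V]=[U_{-\alpha},V]$ directly, so $v=v_-$ without ever touching $v_+$. You instead push $w_\alpha v$ into $[U_{\alpha+\beta},V]$, use the commutation $[U_\alpha,U_{\alpha+\beta}]=1$ to keep $v_+$ there, and then kill $v_+$ by a sign conflict on $i_\beta$. Your tactic is in fact the one the paper deploys in the \emph{next} claim (type $B_2$), where no single Weyl reflection sends $\beta$ to $\pm\alpha$ and one is forced through an auxiliary root; it is perfectly valid here too, just one step longer than the $A_2$-specific shortcut via $\sigma_{\alpha+\beta}(\beta)=-\alpha$. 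The upshot is the same: $v_+=0$, $v=v_-\in R(V)$.
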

\begin{proofclaim}
\[\begin{tikzpicture}[scale=.5]\label{f:A2}
    \foreach\ang in {60,120,...,360}{
     \draw[->,thick] (0,0) -- (\ang:3cm);
    }
\node[anchor=south,scale=0.6] at (3,0) {$\beta$};
\node[anchor=east,scale=0.6] at (-1.5,2.6) {$\alpha$};
\node[anchor=west,scale=0.6] at (1.5,2.6) {$\alpha + \beta$};
  \end{tikzpicture}
\]

Here 
$i_{\alpha + \beta} = i_\alpha i_\beta$ (the reader may wish to return to \S\ref{s:Prelude}); also notice that $i_\beta w_\alpha = w_\alpha i_\alpha i_\beta$. Since $\<\alpha, \beta^\vee\> = - 1$, the involution $i_\beta$ inverts $U_\alpha$, observe that $i_\beta \d_\alpha w_\alpha v = - \d_\alpha i_\beta w_\alpha v = - \d_\alpha w_\alpha i_\alpha i_\beta v$.

\begin{itemize}
\item
If $v \in C_V(i_\beta)$, then $\d_\alpha w_\alpha v \in C_V(i_\beta) = C_V(G_\beta)$; hence $v_+, v_- \in R(V)$: we are done.
\item
If $v \in [U_\beta, V]$, then both $i_\beta$ and $i_\alpha$ invert $v$: hence $i_{\alpha + \beta} = i_\alpha i_\beta$ centralises $v$, so that $v = w_{\alpha + \beta} v \in [w_{\alpha + \beta} U_\beta w_{\alpha + \beta}^{-1}, V] = [U_{-\alpha}, V]$. Hence $v \in R(V)$.
\item
Likewise, if $v \in [U_{-\beta}, V]$, then $v \in [U_\alpha, V]$.\qedhere
\end{itemize}
\end{proofclaim}

\begin{claim}
If $\alpha \doubleleq \beta$, then $v \in R(V)$.
\end{claim}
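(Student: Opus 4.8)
The plan is to imitate the structure of the case $\alpha \simple \beta$, splitting according to the behaviour of $v$ under $G_\beta$ into the three situations $v \in C_V(i_\beta)$, $v \in [U_\beta, V]$ and $v \in [U_{-\beta}, V]$, and showing in each that the two components $v_+ = -\d_\alpha w_\alpha v \in [U_\alpha, V]$ and $v_- = -w_\alpha \d_\alpha v \in [U_{-\alpha}, V]$ stay decomposed under $G_\beta$ (they are automatically decomposed under the other root $\SL_2$-subgroups, of which there are none here). First I would record the relevant rank-two data: with $\alpha$ short and $\beta$ long one has $\<\alpha, \beta^\vee\> = -1$ and $\<\beta, \alpha^\vee\> = -2$, the positive roots are $\alpha, \beta, \alpha + \beta, 2\alpha + \beta$, and expressing coroots in the cobasis as in \S\ref{s:Prelude} gives $(2\alpha + \beta)^\vee = \alpha^\vee + \beta^\vee$, hence $i_{2\alpha + \beta} = i_\alpha i_\beta$. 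Crucially $\<\alpha, \beta^\vee\> = -1$ is \emph{odd}, so $i_\beta$ still inverts $U_\alpha$ and the relation $i_\beta w_\alpha = w_\alpha i_\alpha i_\beta$ persists; the case $v \in C_V(i_\beta)$ therefore goes through word for word as before, giving $v_+, v_- \in C_V(i_\beta) = C_V(G_\beta)$.

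The difficulty lies in the cases $v \in [U_{\pm\beta}, V]$. Here the device used for $\alpha \simple \beta$ -- relocating $v$ by a Weyl element sending $\beta$ to $-\alpha$ -- is unavailable, since $\alpha$ and $\beta$ have different lengths and no element of the Weyl group conjugates $U_\beta$ onto $U_{\pm\alpha}$. This length obstruction is the main obstacle and it demands a different mechanism.

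I would supply it through the long root $2\alpha + \beta$, which \emph{is} conjugate to $\beta$, namely $w_\alpha U_\beta w_\alpha^{-1} = U_{2\alpha + \beta}$. Take $v \in [U_\beta, V]$. Then $w_\alpha v \in [U_{2\alpha + \beta}, V]$; and because $3\alpha + \beta$ is not a root, $U_\alpha$ and $U_{2\alpha + \beta}$ commute, so $u_\alpha$ -- hence $\d_\alpha$ -- preserves $[U_{2\alpha + \beta}, V]$, whence $v_+ = -\d_\alpha w_\alpha v$ lies in $[U_\alpha, V] \cap [U_{2\alpha + \beta}, V]$. Reading off the two central involutions now gives $i_\alpha v_+ = -v_+$ and $i_{2\alpha + \beta} v_+ = -v_+$, so the identity $i_{2\alpha + \beta} = i_\alpha i_\beta$ forces $i_\beta v_+ = +v_+$. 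On the other hand the same operator manipulation as in the case $\alpha \simple \beta$ (using $i_\beta w_\alpha = w_\alpha i_\alpha i_\beta$ and $u_\alpha + u_{\alpha, -1} = 2$ on $[G_\alpha, V]$, read off $\Nat \SL_2$) computes $i_\beta v_+ = -v_+$. As the characteristic is not $2$ these collide to give $v_+ = 0$, so $v = v_- \in [U_{-\alpha}, V] \cap [U_\beta, V]$ sits inside a single spot, hence in $R(V)$. The case $v \in [U_{-\beta}, V]$ is entirely symmetric, using $[U_\alpha, U_{-\beta}] = 1$ and $w_\alpha U_{-\beta} w_\alpha^{-1} = U_{-(2\alpha + \beta)}$ to force $v_- = 0$ and $v = v_+ \in [U_\alpha, V] \cap [U_{-\beta}, V] \subseteq R(V)$.

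The heart of the argument is thus the confrontation of two independent computations of $i_\beta v_+$: a \emph{geometric} one, reading the involution off the coroot identity $i_{2\alpha + \beta} = i_\alpha i_\beta$ once $v_+$ has been trapped in $[U_{2\alpha + \beta}, V]$, and a \emph{computational} one, pushing $i_\beta$ through $\d_\alpha w_\alpha$. I expect the one genuinely delicate point to be the claim that $\d_\alpha$ preserves $[U_{2\alpha + \beta}, V]$, that is, that $U_\alpha$ and $U_{2\alpha + \beta}$ commute not merely as subgroups of $G$ but compatibly with the module structure; everything else is bookkeeping inherited from the single-bond case.
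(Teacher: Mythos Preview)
Your argument is correct and matches the paper's essentially verbatim: the paper also splits on the $G_\beta$-behaviour of $v$, and in the case $v \in [U_\beta, V]$ traps $v_+$ in $[U_{2\alpha+\beta}, V]$ via $w_\alpha U_\beta w_\alpha^{-1} = U_{2\alpha+\beta}$ and $[U_\alpha, U_{2\alpha+\beta}] = 1$, then kills it with the involution identity (phrasing the contradiction as $v_+ \in [i_{2\alpha+\beta}, V] \cap C_V(i_{2\alpha+\beta})$ rather than computing $i_\beta v_+$ twice, which is the same thing multiplied through by $i_\alpha$). The step you flag as delicate is precisely where the paper invokes the three subgroups lemma, and it is indeed immediate from the group-level commutation.
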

\begin{proofclaim}
\[\begin{tikzpicture}[scale=.5]\label{f:B2}
    \foreach\ang in {90,180,...,360}{
     \draw[->,thick] (0,0) -- (\ang:3cm);
    }
    \foreach\ang in {45,135,...,315}{
     \draw[->,thick] (0,0) -- (\ang:2.12cm);
     \draw[->,dotted] (0,0) -- (\ang:2*2.12cm);
    }
\node[anchor=west,scale=0.6] at (3,0) {$\beta$};
\node[anchor=east,scale=0.6] at (-1.5,1.5) {$\alpha$};
\node[anchor=west,scale=0.6] at (1.5,1.5) {$\alpha + \beta$};
\node[anchor=south,scale=0.6] at (0,3) {$2 \alpha + \beta$};
  \end{tikzpicture}
\]

Now $i_{2\alpha + \beta} = i_\alpha i_\beta$ and $i_\beta w_\alpha = w_\alpha i_{2\alpha + \beta}$.
Since $\<\alpha, \beta^\vee\> = - 1$, the involution $i_\beta$ inverts $U_\alpha$, and one still has $i_\beta \d_\alpha w_\alpha v = - \d_\alpha w_\alpha i_\alpha i_\beta v$.

\begin{itemize}
\item
If $v \in C_V(i_\beta)$, then $i_\beta \d_\alpha w_\alpha v = \d_\alpha w_\alpha v$, so $v_+$ lies in $C_V(i_\beta)$; since $v$ as well, so does $v_-$. As a consequence $v_+, v_- \in R(V)$.
\item
If $v \in [U_\beta, V]$, then $w_\alpha v \in [w_\alpha U_\beta w_\alpha^{-1}, V] = [U_{2\alpha + \beta}, V]$. Now $[U_\alpha, U_{2\alpha + \beta}] = 1$ in the group, so by the three subgroups lemma $v_+ = - \d_\alpha w_\alpha v \in [U_{2\alpha + \beta}, V] \leq [i_{2\alpha + \beta}, V]$.

However $i_\alpha i_\beta \d_\alpha w_\alpha v = \d_\alpha w_\alpha v$ so $v_+ \in C_V(i_\alpha i_\beta) = C_V(i_{2\alpha + \beta})$.
This shows $v_+ = 0$, and therefore $v = v_- \in R(V)$.
%
%
\item
There is a similar argument showing $v = v_+ \in R(V)$ if $v \in [U_{-\beta}, V]$.
\qedhere
\end{itemize}
\end{proofclaim}

\begin{claim}
If $\alpha \tripleleq \beta$, then $v = 0 \in R(V)$.
\end{claim}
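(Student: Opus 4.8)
This is the $G_2$ case, with $\alpha$ short and $\beta$ long (both nodes are extremal, joined by a triple bond). Since $G_2$ has no nonzero minuscule weight, one expects the moved part $[G_\alpha, V]$ to collapse entirely, and this is exactly what $v = 0$ records. The plan is to continue exactly as in the previous two claims: write $v = v_+ + v_-$ with $v_+ = -\d_\alpha w_\alpha v \in [U_\alpha, V]$ and $v_- = -w_\alpha \d_\alpha v \in [U_{-\alpha}, V]$, and to show $v_+ = v_- = 0$ separately. The first preparatory step is to read off how $i_\beta$ acts on each summand: because $\<\alpha, \beta^\vee\> = -1$, the toral involution $i_\beta$ inverts both $U_\alpha$ and $U_{-\alpha}$, hence stabilises each of $[U_\alpha, V]$ and $[U_{-\alpha}, V]$; projecting the relation $i_\beta v = \varepsilon v$ (with $\varepsilon = +1$ if $v \in C_V(i_\beta)$ and $\varepsilon = -1$ if $v \in [U_{\pm\beta}, V]$) onto these summands yields $i_\beta v_\pm = \varepsilon v_\pm$.

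The core of the argument is then to exhibit, in each case, two root $\SL_2$-subgroups that centralise $v_\pm$ and whose Weyl reflections carry $U_{\pm\alpha}$ onto \emph{opposite} root subgroups. Using the coroot expressions from the Prelude (so that $i_{2\alpha+\beta} = i_\beta$ and $i_{\alpha+\beta} = i_{3\alpha+\beta} = i_\alpha i_\beta$), I would argue as follows. If $v \in C_V(i_\beta)$, then $v_\pm$ is fixed by $i_\beta = i_{2\alpha+\beta}$, so $v_\pm \in C_V(G_\beta) \cap C_V(G_{2\alpha+\beta})$ and both $w_\beta$ and $w_{2\alpha+\beta}$ fix $v_\pm$ pointwise; since $\sigma_\beta(\alpha) = \alpha+\beta$ while $\sigma_{2\alpha+\beta}(\alpha) = -(\alpha+\beta)$, conjugating places $v_+$ simultaneously in $[U_{\alpha+\beta}, V]$ and in $[U_{-(\alpha+\beta)}, V]$, whose intersection is $0$ by Proposition \ref{p:localanalysis} (and symmetrically for $v_-$). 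If instead $v \in [U_{\pm\beta}, V]$, then $i_\beta v_\pm = -v_\pm$, so $i_\alpha i_\beta$ fixes $v_\pm$; as $i_{\alpha+\beta} = i_{3\alpha+\beta} = i_\alpha i_\beta$, this gives $v_\pm \in C_V(G_{\alpha+\beta}) \cap C_V(G_{3\alpha+\beta})$, and now $w_{\alpha+\beta}, w_{3\alpha+\beta}$ fix $v_\pm$ while $\sigma_{\alpha+\beta}(\alpha) = 2\alpha+\beta$ and $\sigma_{3\alpha+\beta}(\alpha) = -(2\alpha+\beta)$, forcing $v_+ \in [U_{2\alpha+\beta}, V] \cap [U_{-(2\alpha+\beta)}, V] = 0$, and likewise $v_- = 0$. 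In every case $v = v_+ + v_- = 0 \in R(V)$.

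The routine part is the root-datum bookkeeping: expressing $(\alpha+\beta)^\vee$, $(2\alpha+\beta)^\vee$, $(3\alpha+\beta)^\vee$ in the cobasis to obtain the three involution identities, and evaluating the relevant reflections, all of which can be read straight off the $G_2$ picture as in the Prelude. The genuine obstacle, and the reason $G_2$ forces $v = 0$ rather than merely $v \in R(V)$ as in the $A_2$ and $B_2$ claims, is the structural fact that each short root $\pm\alpha$ is conjugate, under a pair of $v_\pm$-fixing reflections, to two antipodal root subgroups $U_{\pm\delta}$; this is precisely the shadow of $G_2$ possessing no minuscule weight other than $0$. The one point demanding care is to check that the two chosen reflection elements fix $v_\pm$ \emph{individually} and not merely $v$, which is why I first isolate the action of $i_\beta$ on the summands $[U_{\pm\alpha}, V]$.
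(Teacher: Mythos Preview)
Your argument is correct and essentially matches the paper's: the same involution identities $i_{2\alpha+\beta}=i_\beta$ and $i_{\alpha+\beta}=i_{3\alpha+\beta}=i_\alpha i_\beta$ are used, and in the case $v\in C_V(i_\beta)$ the paper does exactly what you do (fix $v_+$ by $w_\beta$ and $w_{2\alpha+\beta}$ and land in $[U_{\alpha+\beta},V]\cap[U_{-\alpha-\beta},V]=0$). The only cosmetic difference is in the case $v\in[U_{\pm\beta},V]$: rather than passing to $v_\pm$ and conjugating $U_{\pm\alpha}$ by $w_{\alpha+\beta},w_{3\alpha+\beta}$ as you do, the paper works directly with $v$ and conjugates $U_\beta$ by the same two Weyl elements, obtaining $v\in[U_{3\alpha+2\beta},V]\cap[U_{-3\alpha-2\beta},V]=0$ in one stroke without first decomposing.
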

\begin{proofclaim}
\[\begin{tikzpicture}[scale=.5]
    \foreach\ang in {60,120,...,360}{
     \draw[->,thick] (0,0) -- (\ang:3cm);
    }
    \foreach\ang in {30,90,...,330}{
     \draw[->,thick] (0,0) -- (\ang:1.73cm);
\draw[->,dotted] (0,0) -- (\ang:3*1.73cm);
    }
\node[anchor=west,scale=0.6] at (3,0) {$\beta$};
\node[anchor=east,scale=0.6] at (-1.73,1) {$\alpha$};
\node[anchor=west,scale=0.6] at (1.73,1) {$\alpha + \beta$};
\node[anchor=south,scale=0.6] at (0,2) {$2\alpha + \beta$};
\node[anchor=south,scale=0.6] at (1.5,2.6) {$3 \alpha + 2\beta$};
\node[anchor=south,scale=0.6] at (- 1.5,2.6) {$3 \alpha + \beta$};
  \end{tikzpicture}
\]

Finally $i_{\alpha + \beta} = i_\alpha i_\beta = i_{3\alpha + \beta}$; also $i_{2\alpha + \beta} = i_\beta$ and $i_{3\alpha + 2\beta} = i_\alpha$.

\begin{itemize}
\item
If $v \in C_V(i_\beta)$ then it can be checked that $i_\beta \d_\alpha w_\alpha v = - \d_\alpha w_\alpha i_\alpha i_\beta v = \d_\alpha w_\alpha v$, implying that $v_+ = w_\beta v_+ \in [w_\beta U_\alpha w_\beta^{-1}, V] = [U_{\alpha + \beta}, V]$ and $v_+ = w_{2\alpha + \beta} v_+ \in [w_{2\alpha + \beta} U_\alpha w_{2\alpha + \beta}^{-1}, V] = [U_{-\alpha - \beta}, V]$, so $v_+ = 0$. One can show $v_- = 0$ as well; hence $v = 0 \in R(V)$.
\item
If $v \in [U_\beta, V]$, then both $i_\alpha$ and $i_\beta$ invert $v$; as a consequence one has $v = w_{\alpha + \beta} v \in [w_{\alpha + \beta} U_\beta w_{\alpha + \beta}^{-1}, V] = [U_{-3\alpha - 2\beta}, V]$ and $v = w_{3\alpha + \beta} v \in [w_{3\alpha + \beta} U_\beta w_{3\alpha + \beta}^{-1}, V] = [U_{3\alpha + 2\beta}, V]$, so $v = 0 \in R(V)$.
\item
There is a similar argument if $v \in [U_{-\beta}, V]$.
\end{itemize}
Notice that in case $\bG = G_2$ we proved $v = v_0$ with the above notations. This means that $V$ is centralised by $i_\alpha$ and therefore by $G_\alpha$, so by simplicity of $\bG$ the action of $G$ on $V$ is actually trivial (or at least, without simplicity, there is something to factor out).
\end{proofclaim}

This completes the proof of Proposition \ref{p:sumofspots}.
\end{proof}

\subsection{Weyl Group Action}

We now wish to see how the Weyl group permutes spots: it is as expected, with the major warning that it is not entirely clear what this means in the case of the Lie ring (see \S\ref{s:remarks} for a warning, and remember our contortions in Notation \ref{n:realisation:uniform}). Our approach is elementary again.

In Notation \ref{n:rootdatum}, for any $\alpha \in \Phi$ we introduced the reflection $\sigma_\alpha(e) = e - \<e, \alpha^\vee\> \alpha$.
Also remember from Notation \ref{n:realisation:uniform} that we have let $\omega_\alpha = w_\alpha$ if $\cG = G$ and $\omega_\alpha = 1 - h_\alpha^2 + x_\alpha - x_{-\alpha}$ if $\cG = \fg$; the action of $\omega_\alpha$ is as expected by Proposition \ref{p:localanalysis}.

Before the statement, observe that will shall be woriking with simple roots throughout. The author did not think about extending to other roots; this will not be necessary.

\begin{proposition}\label{p:transitions}
For all $(\alpha, \mu) \in \Phi_s \times M$, one has $\omega_\alpha S_\mu = S_{\sigma_\alpha (\mu)}$.
\end{proposition}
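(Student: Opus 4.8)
The plan is to establish the single inclusion $\omega_\alpha S_\mu \subseteq S_{\sigma_\alpha\mu}$ for every $\mu \in E$ (trivially true when $S_\mu = \{0\}$); the reverse inclusion then comes for free. Indeed, applying this to the pair $(\alpha, \sigma_\alpha\mu)$ and using $\sigma_\alpha^2 = \mathrm{id}$ gives $\omega_\alpha S_{\sigma_\alpha\mu} \subseteq S_\mu$, whence $\omega_\alpha^2 S_{\sigma_\alpha\mu} \subseteq \omega_\alpha S_\mu \subseteq S_{\sigma_\alpha\mu}$; but by Proposition \ref{p:localanalysis} the operator $\omega_\alpha^2$ acts as $+1$ on $\Triv_V(\cG_\alpha)$ and as $-1$ on $\lfloor \cG_\alpha, V\rfloor$, so it is an invertible scalar on the spot $S_{\sigma_\alpha\mu}$ and the three terms coincide. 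Since $S_\mu = \bigcap_{\gamma\in\Phi_s} V_{(\mu, \gamma^\vee)}$ and $\omega_\alpha$ is a bijection, it suffices to prove $\omega_\alpha S_\mu \subseteq V_{(\sigma_\alpha\mu, \gamma^\vee)}$ for each simple $\gamma$. Throughout I use the two reflection identities $\<\sigma_\alpha\mu, \gamma^\vee\> = \<\mu, \gamma^\vee\> - \<\mu, \alpha^\vee\>\<\alpha, \gamma^\vee\>$ and $\<\sigma_\alpha\mu, (\sigma_\alpha\gamma)^\vee\> = \<\mu, \gamma^\vee\>$.

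The Lie ring is painless. By Remark \ref{r:spots}, a vector $v$ lies in $S_\mu$ exactly when $h_\gamma v = \<\mu, \gamma^\vee\> v$ for all $\gamma \in \Phi_s$. The conjugation formula of Proposition \ref{p:localanalysis} (with $\lambda = 1$) reads $\omega_\alpha h_\gamma \omega_\alpha^{-1} = h_\gamma - \<\alpha, \gamma^\vee\> h_\alpha$; applying it to $\alpha$ itself shows that conjugation by $\omega_\alpha$ is an involution on the image of $\cT$, so the same expression also computes $\omega_\alpha^{-1} h_\gamma \omega_\alpha$. Hence, for $v \in S_\mu$, one gets $h_\gamma \omega_\alpha v = \omega_\alpha(h_\gamma - \<\alpha, \gamma^\vee\> h_\alpha) v = \<\sigma_\alpha\mu, \gamma^\vee\>\, \omega_\alpha v$. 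As the eigenvalues of $h_\gamma$ lie in $\{-1, 0, 1\}$, this places $\omega_\alpha v$ in $V_{(\sigma_\alpha\mu, \gamma^\vee)}$, and we are done.

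For the group the mechanism is conjugation of root subgroups: since $w_\alpha U_\beta w_\alpha^{-1} = U_{\sigma_\alpha\beta}$ and $\omega_\alpha$ is a bijection, Definition \ref{d:spotsandmasses} yields $\omega_\alpha V_{(\mu, \beta^\vee)} = V_{(\sigma_\alpha\mu, (\sigma_\alpha\beta)^\vee)}$ for \emph{every} root $\beta$, so that $\omega_\alpha S_\mu \subseteq \bigcap_{\gamma\in\Phi_s} V_{(\sigma_\alpha\mu, (\sigma_\alpha\gamma)^\vee)}$. The remaining task is to replace each coroot $(\sigma_\alpha\gamma)^\vee$ by $\gamma^\vee$. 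When $\gamma$ is orthogonal to $\alpha$ one has $\sigma_\alpha\gamma = \gamma$ and $\<\sigma_\alpha\mu, \gamma^\vee\> = \<\mu, \gamma^\vee\>$, so there is nothing to do; when $\gamma = \alpha$, the claim is exactly that $w_\alpha$ interchanges $\lfloor \cU_\alpha, V\rfloor$ and $\lfloor \cU_{-\alpha}, V\rfloor$ and fixes $\Triv_V(\cG_\alpha)$, which is Proposition \ref{p:localanalysis}.

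The case of $\gamma$ adjacent to $\alpha$ is the crux and the only place where we genuinely compute. Here $\sigma_\alpha\gamma$ is a \emph{non-simple} positive root of the rank-two subsystem $\<\alpha, \gamma\>$ (of type $A_2$, $B_2$, or $G_2$), so the $G_\gamma$-behaviour of $w_\alpha v$ cannot be read off the spot directly. The plan is to detect it by hand, in the style of Proposition \ref{p:sumofspots}. First apply the central involution: writing $i_\gamma w_\alpha v = w_\alpha (w_\alpha^{-1} i_\gamma w_\alpha) v = w_\alpha\, i_{\sigma_\alpha\gamma} v$ and expressing $i_{\sigma_\alpha\gamma} = i_\gamma\, i_\alpha^{\<\alpha, \gamma^\vee\>}$ through $i_\alpha, i_\gamma$ by the coroot arithmetic of \S\ref{s:Prelude}, one finds that $i_\gamma$ multiplies $w_\alpha v$ by a definite sign, which separates the value $\<\sigma_\alpha\mu, \gamma^\vee\> = 0$ (membership in $\Triv_V(\cG_\gamma)$) from $\pm 1$ (membership in $\lfloor \cG_\gamma, V\rfloor$). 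To pin down the sign inside $\lfloor \cG_\gamma, V\rfloor$ one transports $w_\alpha v$ by a suitable Weyl element $\omega_\delta$ into a recognisable root subgroup, exactly as in the claims of Proposition \ref{p:sumofspots}, invoking the crux relation $\d_{\gamma, \lambda}\omega_\gamma = -\tau_{\gamma, \lambda}$ on $\lfloor \cU_\gamma, V\rfloor$. I expect this sign-tracking to be the main obstacle: the three rank-two diagrams must be handled explicitly, and non-minuscule configurations — those producing $\<\sigma_\alpha\mu, \gamma^\vee\> \notin \{-1, 0, 1\}$, such as the value $2$ that arises for adjacent roots in $A_2$ — must be shown to force $w_\alpha v = 0$, consistently with $S_{\sigma_\alpha\mu} = \{0\}$. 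Once all simple coroots are matched the inclusion follows, and with it the proposition.
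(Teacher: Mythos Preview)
Your strategy is precisely that of the paper: reduce to the one-sided inclusion, dispose of the Lie ring via the conjugation formula $\omega_\alpha h_\gamma \omega_\alpha^{-1} = h_\gamma - \<\alpha,\gamma^\vee\> h_\alpha$ from the proof of Proposition~\ref{p:localanalysis}, and in the group case isolate the adjacent simple root $\gamma$ as the only nontrivial piece, to be handled by involution arithmetic and Weyl-conjugation of root subgroups in the rank-two subsystem. Your Lie-ring argument is in fact a touch cleaner than the paper's (you read $\<\sigma_\alpha\mu,\gamma^\vee\>\in\{-1,0,1\}$ off the spectrum of $h_\gamma$ rather than via an auxiliary non-simple root), and your framing via $\omega_\alpha V_{(\mu,\beta^\vee)} = V_{(\sigma_\alpha\mu,(\sigma_\alpha\beta)^\vee)}$ is a pleasant way to see why only the adjacent case requires work.

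That said, the adjacent case is the whole content of the proposition in the group setting, and you leave it as a plan. The paper carries out exactly what you outline, splitting on the bond type ($\alpha\simple\beta$, $\alpha\doubleleq\beta$, $\alpha\doublegeq\beta$) and within each on the value $\<\mu,\beta^\vee\>$; the ``forbidden'' values (your $\<\sigma_\alpha\mu,\gamma^\vee\>=2$) are killed by showing $S_\mu\leq [U_\delta,V]\cap[U_{-\delta},V]=0$ for some root $\delta$, contradicting $\mu\in M$. Your sketch of the mechanism (compute the $i_\gamma$-sign, then transport by a Weyl element into a recognisable $[U_\delta,V]$) is accurate, but none of these computations is carried out. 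So: correct approach, matching the paper, but the core case division still needs to be written.
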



\begin{proof}
The case of the Lie ring is straightforward and will be dealt with quickly.

\begin{claim}
We may suppose $\cG = G$.
\end{claim}
\begin{proofclaim}
Suppose $\cG = \fg$; let $\mu \in M$ be a mass; let $\alpha, \beta$ be any two (possibly equal) simple roots.
First notice that in the Lie ring $(\End(V), +, \llbracket \cdot, \cdot \rrbracket)$, one has $\llbracket h_\beta, \omega_\alpha\rrbracket = \llbracket h_\beta, x_\alpha + x_{-\alpha}\rrbracket = \<\alpha, \beta^\vee\> (x_\alpha - x_{-\alpha})$. On the other hand, as one checks by piecewise inspection with the help of Proposition \ref{p:localanalysis}, for $v \in V_{(\mu, \alpha^\vee)}$ holds: $(x_{-\alpha} - x_\alpha) v = - \<\mu, \alpha^\vee\> \omega_\alpha v$.
So for $v \in S_\mu \leq V_{(\mu, \alpha^\vee)} \cap V_{(\mu, \beta^\vee)}$, one has:
\begin{align*}
(h_\beta - \<\sigma_\alpha(\mu), \beta^\vee\>)\omega_\alpha v & = \left(\omega_\alpha h_\beta + \<\alpha, \beta^\vee\> (x_\alpha - x_{-\alpha}) - (\<\mu, \beta^\vee\> - \<\mu, \alpha^\vee\>\<\alpha, \beta^\vee\>) \omega_\alpha\right) v\\
& = \omega_\alpha \left(\<\mu, \beta^\vee\> - \<\alpha, \beta^\vee\>\<\mu, \alpha^\vee\> - \<\mu, \beta^\vee\> + \<\mu, \alpha^\vee\>\<\alpha, \beta^\vee\>\right) v\\
& = 0
\end{align*}
showing that $\omega_\alpha S_\mu \leq \ker (h_\beta - \<\sigma_\alpha(\mu), \beta^\vee\>)$.

We claim that $\ker (h_\beta - \<\sigma_\alpha(\mu), \beta^\vee\>) = V_{(\sigma_\alpha(\mu), \beta^\vee)}$; by construction (see Remark \ref{r:spots}) it suffices to see why $\<\sigma_\alpha(\mu), \beta^\vee\> \in \{-1, 0, 1\}$.  But let $\gamma \in E$ satisfy $\gamma^\vee = \sigma_{\alpha}^\vee (\beta^\vee) = \beta^\vee - \<\alpha, \beta^\vee\> \alpha^\vee$; we know that $\gamma \in \Phi$ (not necessarily simple though). Now,
\[h_\gamma = \gamma^\vee(1) = \beta^\vee (1) - \<\alpha, \beta^\vee\> \alpha^\vee (1) = h_\beta - \<\alpha, \beta^\vee\> h_\alpha\]
acts on $S_\mu$ as the integer
\[\<\mu, \beta^\vee\> - \<\alpha, \beta^\vee\> \<\mu, \alpha^\vee\> = \<\sigma_\alpha(\mu), \beta^\vee\>\]
Since $\fg_\gamma$ is quadratic -- bear in mind the assumption was on \emph{all} roots -- this integer remains in $\{-1, 0, 1\}$, as desired.

Therefore $\omega_\alpha S_\mu \leq \ker (h_\beta - \<\sigma_\alpha(\mu), \beta^\vee\>) = V_{(\sigma_\alpha(\mu), \beta^\vee)}$.
Since this holds for any $\beta \in \Phi_s$, one has $\omega_\alpha S_\mu \leq S_{\sigma_\alpha(\mu)}$. Since this holds for any mass $\mu \in M$, one also finds $\omega_\alpha S_{\sigma_\alpha(\mu)} \leq S_\mu$, proving equality.
\end{proofclaim}

We move to the case of the group, for which there is no such argument: exactly like in Proposition \ref{p:sumofspots}, no toral element in $G_\alpha$ can capture $\<\mu, \alpha^\vee\>$.

\begin{claim}
We may assume $\<\mu, \alpha^\vee\> = 1$; it suffices to prove that for any $\beta \in \Phi_s$,
\[\omega_\alpha S_\mu \leq V_{(\mu - \alpha, \beta^\vee)} \quad(*)\]
\end{claim}
\begin{proofclaim}
First suppose $\<\mu, \alpha^\vee\> = 0$. Then $\sigma_\alpha(\mu) = \mu$ and $\omega_\alpha$ acts as $\Id$ on $S_\mu$: there is nothing to prove. We then turn to $\<\mu, \alpha^\vee\> = \pm 1$. Observe how it suffices to check $\omega_\alpha S_\mu \leq S_{\sigma_\alpha(\mu)}$:  then one will find $S_\mu = \omega_\alpha^2 S_\mu \leq \omega_\alpha S_{\sigma_\alpha(\mu)} \leq S_\mu$, proving equality.

So it suffices to see that $\omega_\alpha S_\mu \leq S_{\sigma_\alpha(\mu)}$; by symmetry, we may assume $\<\mu, \alpha^\vee\> = 1$, so that $\sigma_\alpha(\mu) = \mu - \alpha$. We then wish to show $\omega_\alpha S_\mu \leq S_{\mu - \alpha}$. This we shall do by taking another simple root $\beta \in \Phi_s$ and showing that the action of $G_\beta$ on $\omega_\alpha S_\mu$ is as expected, viz.
condition $(*)$ above.
\end{proofclaim}

We start a case divison based on the nature of the bound between $\alpha$ and $\beta$.

\begin{claim}
If $\beta$ is not bound to $\alpha$ then $(*)$ holds.
\end{claim}
\begin{proofclaim}
%
%
%

If $\beta$ equals $\alpha$ then with the assumption that $\<\mu, \alpha^\vee\> = 1$, one finds $S_\mu \leq V_{(\mu, \alpha^\vee)} = \lfloor \cU_\alpha, V\rfloor$, and:
\[\omega_\alpha S_\mu \leq \omega_\alpha \lfloor \cU_\alpha, V\rfloor = \lfloor \cU_{-\alpha}, V\rfloor = V_{(\sigma_\alpha(\mu), \alpha^\vee)}\qedhere\]
If $\beta$ is neither bound nor equal to $\alpha$, then $(*)$ is obvious since the images of $\cG_\alpha$ and $\cG_\beta$ in $\End(V)$ commute, and $\<\sigma_\alpha(\mu), \beta^\vee\> = \<\mu, \beta^\vee\>$.
\end{proofclaim}


\begin{claim}
If $\alpha \simple \beta$, then $(*)$ holds.
\end{claim}
\begin{proofclaim}
There is a picture on page \pageref{f:A2}; in particular bear in mind that $i_{\alpha + \beta} = i_\alpha i_\beta$.
Also notice that $\<\mu - \alpha, \beta^\vee\> = \<\mu, \beta^\vee\> + 1$.
\begin{itemize}
\item
Suppose $\<\mu, \beta^\vee\> = -1$; notice that $\<\mu - \alpha, \beta^\vee\> = 0$. Since both $i_\alpha$ and $i_\beta$ invert $S_\mu$, one has $S_\mu \leq C_V(i_{\alpha + \beta}) = C_V(G_{\alpha + \beta})$, and $w_\alpha S_\mu \leq C_V(w_\alpha G_{\alpha + \beta} w_\alpha^{-1}) = C_V(G_\beta) = V_{(\mu - \alpha, \beta^\vee)}$.
\item
Now suppose $\<\mu, \beta^\vee\> = 0$; hence $\<\mu - \alpha, \beta^\vee\> = 1$. 
Then $i_\beta$ centralises $S_\mu$, so $S_\mu = w_\beta S_\mu \leq [w_\beta U_\alpha w_\beta^{-1}, V] = [U_{\alpha + \beta}, V]$. Hence $w_\alpha S_\mu \leq [w_\alpha U_{\alpha + \beta}w_\alpha^{-1}, V] = [U_\beta, V] = V_{(\mu - \alpha, \beta^\vee)}$.
\item
Finally suppose $\<\mu, \beta^\vee\> = 1$; notice that now $\<\mu - \alpha, \beta^\vee\> = 2$ and there is a contradiction in the air.
Here again, both $i_\alpha$ and $i_\beta$ invert $S_\mu$ so $i_{\alpha + \beta}$ centralises it. Therefore $S_\mu = w_{\alpha + \beta} S_\mu \leq [w_{\alpha + \beta} U_\alpha w_{\alpha+ \beta}^{-1}, V] = [U_{-\beta}, V]$, and $S_\mu \leq [U_\beta, V] \cap [U_{-\beta}, V]= 0$. This is a contradiction to $\mu \in M$, that is, $S_\mu \neq 0$ (see Definition \ref{d:spotsandmasses}).
\qedhere
\end{itemize}
\end{proofclaim}


\begin{claim}
If $\alpha \doubleleq \beta$ then $(*)$ holds.
\end{claim}
\begin{proofclaim}
There is a picture on page \pageref{f:B2}; one has $i_{\alpha + \beta} = i_\alpha$ and $i_{2 \alpha + \beta} = i_\alpha i_\beta$. Notice that $\<\mu - \alpha, \beta^\vee\> = \<\mu, \beta^\vee\> + 1$.
\begin{itemize}
\item
Suppose $\<\mu, \beta^\vee\> = -1$, so that $\<\mu - \alpha, \beta^\vee\> = 0$. Both $i_\alpha$ and $i_\beta$ invert $S_\mu$, so $S_\mu \leq C_V(i_\alpha i_\beta) = C_V(i_{2\alpha + \beta})$ and $w_\alpha S_\mu \leq C_V(w_\alpha G_{2\alpha + \beta} w_\alpha^{-1}) = C_V(G_\beta)$, as desired.
\item
Now suppose $\<\mu, \beta^\vee\> = 0$, so that $\<\mu - \alpha, \beta^\vee\> = 1$. 
Then $U_\alpha$, $U_\beta$, and therefore $U_{2\alpha + \beta}$ as well, centralise $S_\mu$. On the other hand $i_{2\alpha + \beta} = i_\alpha i_\beta$ inverts it, so $S_\mu \leq [i_{2\alpha + \beta}, C_V(U_{2\alpha + \beta})] = [U_{2\alpha + \beta}, V]$ and $w_\alpha S_\mu \leq [w_\alpha U_{2\alpha + \beta} w_\alpha^{-1}, V] = [U_\beta, V] = V_{(\mu - \alpha, \beta^\vee)}$.
\item
Finally suppose $\<\mu, \beta^\vee\> = 1$, so that $\<\mu - \alpha, \beta^\vee\> = 2$. Here again both $i_\alpha$ and $i_\beta$ invert $S_\mu$: so $i_{\alpha + 2\beta}$ centralises it, and therefore $S_\mu = w_{\alpha + 2\beta} S_\mu \leq [w_{\alpha + 2\beta} U_\alpha w_{\alpha + 2\beta}^{-1}, V] = [U_{-\alpha - \beta}, V]$. But $U_\alpha$, $U_\beta$, and therefore $U_{\alpha + \beta}$ as well, centralise $S_\mu$, showing $S_\mu = 0$: against $\mu \in M$.
\qedhere
\end{itemize}
\end{proofclaim}


\begin{claim}
If $\alpha \doublegeq \beta$ then $(*)$ holds.
\end{claim}
\begin{proofclaim}
Be careful that $\beta$ is now the short root; hence $i_{\alpha + \beta} = i_\beta$ and $i_{\alpha + 2\beta} = i_\alpha i_\beta$. Notice that $\<\mu - \alpha, \beta^\vee\> = \<\mu, \beta^\vee\> + 2$.
\begin{itemize}
\item
Suppose $\<\mu, \beta^\vee\> = -1$, so that $\<\mu - \alpha, \beta^\vee\> = 1$. Then both $i_\alpha$ and $i_\beta$ invert $S_\mu$: so $i_{\alpha + 2\beta}$ centralises it, and therefore $S_\mu = w_{\alpha + 2\beta} S_\mu \leq  [w_{\alpha + 2\beta} U_{- \beta} w_{\alpha + 2\beta}^{-1}, V] = [U_{\alpha + \beta}, V]$. Hence $w_\alpha S_\mu \leq [w_\alpha U_{\alpha + \beta} w_\alpha^{-1}, V] = [U_\beta, V]$.
\item
Now suppose $\<\mu, \beta^\vee\> = 0$, so that $\<\mu - \alpha, \beta^\vee\> = 2$. Then $i_{\alpha + \beta} = i_\beta$ centralises $S_\mu$: as a consequence $S_\mu = w_{\alpha + \beta} S_\mu \leq [w_{\alpha + \beta} U_\alpha w_{\alpha + \beta}^{-1}, V] = [U_{-\alpha - 2\beta}, V]$. But $U_{\alpha + 2\beta} \leq \<U_\alpha, U_\beta\>$ centralises $S_\mu$, showing $S_\mu = 0$. This is a contradiction to $\mu \in M$.
\item
Finally, the case $\<\mu, \beta^\vee\> = 1$ was already dealt with in the previous configuration.
\qedhere
\end{itemize}
\end{proofclaim}

We have already proved inconsistency of a configuration of type $G_2$ (Remark \ref{r:G2trivial}).

%

This completes the proof of Proposition \ref{p:transitions}.
\end{proof}

As a consequence (and this was not obvious a priori), the Weyl group does act on the set of masses $M \subseteq E$; in particular for $\mu \in M$ and $\alpha, \beta \in \Phi_s$, one has $\<\sigma_\alpha(\mu), \beta^\vee\> \in \{-1, 0, 1\}$.
Therefore if $\mu \in M$ and $\alpha, \beta \in \Phi_s$ are adjacent in the Dynkin diagram, one cannot have $\<\mu, \alpha^\vee\> = \<\mu, \beta^\vee\> = 1$. (Notice that the proof we just gave did remove such configurations.)

\subsection{Intermezzo -- Isotypical Summands}

\begin{notation}\
\begin{itemize}
\item
Let $\mu \in M$ and $\cl(\mu)$ be the orbit of $\mu$ under the action of the Weyl group of $\bG$;
\item
let $V_{\cl(\mu)} = \oplus_{\nu \in \cl(\mu)} S_\nu$.
\end{itemize}
\end{notation}

\begin{corollary}\label{c:Vcl}
$V_{\cl(\mu)}$ is $\cG$-invariant.
\end{corollary}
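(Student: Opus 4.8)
The plan is to realise $V_{\cl(\mu)}$ as invariant under a generating family of $\cG$, namely the root substructures $\cG_\alpha$ for $\alpha \in \Phi_s$. By the usual structure theory $\cG$ is generated (as a group, resp.\ as a Lie ring) by the $\cG_\alpha$, $\alpha \in \Phi_s$. Moreover the stabiliser $\{g : g\, V_{\cl(\mu)} = V_{\cl(\mu)}\}$ (resp.\ $\{z : z\cdot V_{\cl(\mu)} \subseteq V_{\cl(\mu)}\}$) is a subgroup (resp.\ a sub-Lie-ring, being an additive subgroup closed under the bracket via $\llbracket z, z'\rrbracket\cdot w = z\cdot(z'\cdot w) - z'\cdot(z\cdot w)$). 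So it suffices to fix $\alpha \in \Phi_s$ and prove that $\cG_\alpha$ preserves $V_{\cl(\mu)}$.

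Fix such an $\alpha$ and partition the orbit $\cl(\mu)$ according to the value $\<\nu, \alpha^\vee\> \in \{-1, 0, 1\}$ (which lies in this set because $\nu \in M$ and $\alpha$ is simple, see Definition \ref{d:spotsandmasses}): let $A_0$, $A_+$, $A_-$ collect the masses with value $0$, $1$, $-1$, and set $W_0 = \oplus_{\nu \in A_0} S_\nu$, $W_+ = \oplus_{\nu \in A_+} S_\nu$, $W_- = \oplus_{\nu \in A_-} S_\nu$, so that $V_{\cl(\mu)} = W_0 \oplus W_+ \oplus W_-$. Directly from Definition \ref{d:spotsandmasses}, $W_0 \subseteq \Triv_V(\cG_\alpha)$ while $W_\pm \subseteq \lfloor \cU_{\pm\alpha}, V\rfloor$. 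Since $\cG_\alpha$ fixes $W_0$ pointwise (resp.\ annihilates it), the whole problem reduces to showing that $W_+ \oplus W_-$ is $\cG_\alpha$-invariant.

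Here is the crux. As $\sigma_\alpha$ fixes $A_0$ and exchanges $A_+$ with $A_-$, Proposition \ref{p:transitions} gives $\omega_\alpha W_+ = W_-$ and $\omega_\alpha W_- = W_+$. Now $\cU_\alpha$ acts trivially on $W_+ \subseteq \lfloor \cU_\alpha, V\rfloor = \Triv_{\lfloor \cG_\alpha, V\rfloor}(\cU_\alpha)$; and for $v \in W_-$, writing $v = \omega_\alpha v'$ with $v' = \omega_\alpha^{-1} v \in W_+$, the local-analysis formula $\d_{\alpha, \lambda} \omega_\alpha v' = -\tau_{\alpha, \lambda} v'$ of Proposition \ref{p:localanalysis}, together with the $\cT$-invariance of each spot (Remark \ref{r:spots}), yields $\d_{\alpha, \lambda} v = -\tau_{\alpha, \lambda} v' \in W_+$. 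Since the action of $\cU_\alpha$ sends $v$ to $v + \d_{\alpha, \lambda} v$ (resp.\ to $\d_{\alpha, \lambda} v$), it maps $W_+ \oplus W_-$ into itself; by the symmetric argument exchanging $\alpha \leftrightarrow -\alpha$ and $W_+ \leftrightarrow W_-$, so does $\cU_{-\alpha}$. As $\cG_\alpha = \<\cU_\alpha, \cU_{-\alpha}\>$, it preserves $W_+ \oplus W_-$, hence $V_{\cl(\mu)}$.

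The only genuine difficulty is that middle computation: ensuring the unipotent (resp.\ nilpotent root) action does not leak out of $W_+ \oplus W_-$; everything else is bookkeeping and generation. It is resolved precisely by the identity $\d_{\alpha, \lambda} \omega_\alpha v = -\tau_{\alpha, \lambda} v$, which converts a root action into a toral one that manifestly respects spots. I expect the argument to run uniformly for $\cG = G$ and $\cG = \fg$, the two cases differing only in the reading of $\d_{\alpha, \lambda}$, $\omega_\alpha$, $\tau_{\alpha, \lambda}$ fixed in Notation \ref{n:realisation:uniform}.
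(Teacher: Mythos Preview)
Your argument is correct and follows essentially the same route as the paper: reduce to the action of $\cG_\alpha$ for $\alpha \in \Phi_s$, observe that the $\<\nu,\alpha^\vee\>=0$ part is fixed (resp.\ annihilated), and handle the remaining part by converting the root action into a toral one via the identity $\d_{\alpha,\lambda}\omega_\alpha v = -\tau_{\alpha,\lambda} v$ together with Proposition~\ref{p:transitions} and $\cT$-invariance of spots. The paper's write-up is marginally slicker (it computes $\d_{-\alpha,\lambda} v = \omega_\alpha\tau_{\alpha,\lambda} v \in S_{\sigma_\alpha(\nu)}$ directly using $\d_{-\alpha,\lambda}=\omega_\alpha\d_{\alpha,\lambda}\omega_\alpha^{-1}$), but the content is identical.
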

\begin{proof}
It suffices to prove invariance under all maps $\d_{\pm \alpha, \lambda}$ for $(\alpha, \lambda) \in \Phi_s \times \K$. So let $\nu \in \cl(\mu)$ and $v\in S_\nu$.
\begin{itemize}
\item
If $\<\nu, \alpha^\vee\> = 0$ then $S_\nu \leq \Triv_V(\cG_\alpha)$ is annihilated by $\d_{\pm \alpha, \lambda}$.
\item
Now suppose $\<\nu, \alpha^\vee\> = 1$. Then $S_\nu \leq \lfloor\cU_\alpha, V\rfloor$ is annihilated by $\d_{\alpha, \lambda}$, which is obviously linear.
Recall from Proposition \ref{p:localanalysis} that in $\End(V)$, $\d_{-\alpha, \lambda} = \omega_\alpha \d_{\alpha, \lambda} \omega_\alpha^{-1}$.
As a consequence,
\[
\d_{-\alpha, \lambda} v = \omega_\alpha \d_{\alpha, \lambda} \omega_\alpha^{-1} v = - \omega_\alpha \d_{\alpha, \lambda} \omega_\alpha v = \omega_\alpha \tau_{\alpha, \lambda} v \quad \in \quad \omega_\alpha S_\nu = S_{\sigma_\alpha(\nu)} \leq V_{\cl(\mu)}
\]
\item
There is a similar argument if $\<\nu, \alpha^\vee\> = -1$.
\qedhere
\end{itemize}
\end{proof}

\subsection{Linear Structure}

By Corollary \ref{c:Vcl} we may suppose $V = V_{\cl(\mu_0)}$ for some $\mu_0 \in M$; if $\mu_0 = 0$ then $\cl(\mu_0) = \{0\}$ and $V = \Triv_V(\cG)$: we are done. So we may suppose $\mu_0 \neq 0$.

\begin{notation}\label{n:alpha0}\
\begin{itemize}
\item
Let $\alpha_0 \in \Phi_s$ with $\<\mu_0, \alpha_0^\vee\> = 1$ (up to taking $\sigma_{\alpha_0}(\mu_0)$ instead of $\mu_0$ there is one such).
\item
For $\gamma = (\alpha_1, \dots, \alpha_d) \in \Phi_s^d$, let $\sigma_\gamma = \sigma_{\beta_d} \circ \dots \circ \sigma_{\alpha_1}$ and $\omega_\gamma = \omega_{\alpha_d} \dots \omega_{\alpha_1} \in \End(V)$.

(Be careful that despite the notation, $\sigma_\gamma$ need not be a reflection.)
\end{itemize}
\end{notation}

We now define a field action piecewise on the various spots. Notice that whenever $\sigma_\gamma(\mu) = \nu$, then by Proposition \ref{p:transitions}, $\omega_\gamma$ restricts to a group isomorphism $S_\mu \to S_\nu$.

\begin{notation}\label{n:action}
Let $\lambda \in \K$ and $v \in S_\mu$ for some $\mu \in \cl(\mu_0)$. Take $\gamma \in \Phi_s^d$ with $\sigma_\gamma(\mu_0) = \mu$ and define:
\[\lambda\cdot v = \omega_\gamma \tau_{\alpha_0, \lambda} \omega_\gamma^{-1} v \]
\end{notation}

\begin{remark}
To be more specific, the field appearing here is such that $\cG_{\alpha_0}$ is $\mathbb{L}$-split.
\end{remark}

\begin{proposition}\label{p:linear}
This turns $V$ into a $\K[\cG]$-module.
\end{proposition}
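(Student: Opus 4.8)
The plan is to verify that the action defined in Notation~\ref{n:action} is well-defined and satisfies the module axioms for $\K[\cG]$, proceeding in two logically separate stages: first checking that $\lambda \cdot v$ does not depend on the choice of the word $\gamma$, then checking compatibility of the scalar action with the additive group structure, the field operations, and the action of $\cG$.

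First I would address well-definedness. The scalar $\lambda \cdot v$ is defined using a choice of $\gamma \in \Phi_s^d$ with $\sigma_\gamma(\mu_0) = \mu$; if $\gamma, \gamma'$ are two such words, then $\sigma_{\gamma'}^{-1}\sigma_\gamma$ stabilises $\mu_0$, so it suffices to see that when $\sigma_\gamma$ fixes $\mu_0$ the operator $\omega_\gamma$ commutes with $\tau_{\alpha_0, \lambda}$ on $S_{\mu_0}$. The natural way to obtain this is to compute how $\omega_\gamma$ conjugates the toral operator: by the last point of Proposition~\ref{p:localanalysis}, each $\omega_\alpha$ normalises the image of $\cT$ in $\End(V)$ and conjugates $\tau$-operators according to the root datum pairing, exactly mirroring the reflection $\sigma_\alpha$ acting on $L^\vee$. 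Hence $\omega_\gamma \tau_{\alpha_0,\lambda}\omega_\gamma^{-1}$ depends only on $\sigma_\gamma^\vee(\alpha_0^\vee)$, which is determined by $\sigma_\gamma(\mu_0) = \mu$ together with the value $\<\mu_0,\alpha_0^\vee\> = 1$; this should pin down the operator independently of the word. This is the step I expect to be the main obstacle, since one must carefully match the Weyl action on cocharacters (governing $\tau$) with the Weyl action on masses (governing the indexing of spots), and confirm these are compatible on the relevant orbit.

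Next I would check the elementary axioms. Additivity $\lambda\cdot(v+v') = \lambda\cdot v + \lambda\cdot v'$ and $(\lambda+\lambda')\cdot v = \lambda\cdot v + \lambda'\cdot v$ follow because $\tau_{\alpha_0,\lambda}$ is additive in $v$ and, by the local $\SL_2$ (resp. $\sl_2$) analysis, additive in $\lambda$ on the relevant spot; conjugation by the fixed bijection $\omega_\gamma$ preserves these. Multiplicativity $(\lambda\lambda')\cdot v = \lambda\cdot(\lambda'\cdot v)$ and the unit axiom $1\cdot v = v$ reduce, after conjugating back by $\omega_\gamma^{-1}$, to the corresponding statements for $\tau_{\alpha_0,\cdot}$ acting on $S_{\mu_0} \leq \lfloor\cU_{\alpha_0},V\rfloor$, where they hold by Proposition~\ref{p:localanalysis} and the fact that $\lfloor\cG_{\alpha_0},V\rfloor$ is a genuine $\K$-vector space of natural modules (so the toral elements act as the scalars $\lambda, \lambda^{-1}$ on the two weight lines, and the map $\lambda\mapsto\tau_{\alpha_0,\lambda}$ is multiplicative on the $\<\mu_0,\alpha_0^\vee\>=1$ line).

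Finally I would verify $\K$-linearity of the $\cG$-action, i.e. that each generator $\d_{\pm\beta,\lambda}$ ($\beta\in\Phi_s$) commutes with scalar multiplication. The decomposition $V = \oplus_{\nu} S_\nu$ lets me work spot by spot. On a spot $S_\nu$ with $\<\nu,\beta^\vee\>=0$ the operator $\d_{\pm\beta,\lambda}$ annihilates, so there is nothing to check. On a spot with $\<\nu,\beta^\vee\>=1$, the map $\d_{\beta,\lambda}$ is already $\K$-linear by the local analysis, while $\d_{-\beta,\lambda} = \omega_\beta\d_{\beta,\lambda}\omega_\beta^{-1}$ and the computation $\d_{-\beta,\lambda}v = \omega_\beta\tau_{\beta,\lambda}v$ from the proof of Corollary~\ref{c:Vcl} exhibit it as a composite of the $\K$-linear operators and Weyl elements that carry the field structure between spots. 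The key consistency point is that the scalar action transported from the base spot $S_{\mu_0}$ agrees with the toral action appearing in these formulas; this again rests on the normalisation relation for $\omega_\alpha$ from Proposition~\ref{p:localanalysis}, so that moving a spot by $\omega_\beta$ and then multiplying by $\lambda$ gives the same result as multiplying first and then moving. Assembling these, the $\cG$-action is $\K$-linear, and combined with the axioms above this turns $V$ into a $\K[\cG]$-module, completing the proof.
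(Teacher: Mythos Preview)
Your well-definedness argument has a genuine gap. The assertion that $\sigma_\gamma^\vee(\alpha_0^\vee)$ is determined by $\sigma_\gamma(\mu_0)$ is false: already in type $A_2$ with $\mu_0$ the first fundamental weight and $\alpha_0=\alpha_1$, both the identity and $\sigma_{\alpha_2}$ fix $\mu_0$, yet they send $\alpha_1^\vee$ to $\alpha_1^\vee$ and $\alpha_1^\vee+\alpha_2^\vee$ respectively. In that particular example the resulting toral operators do happen to agree on $S_{\mu_0}$, because the extra term involves $\tau_{\alpha_2,\lambda}$, which is trivial there since $\<\mu_0,\alpha_2^\vee\>=0$. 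But this rescue requires $\alpha_0$ to be the \emph{unique} simple root with nonzero pairing against $\mu_0$; if several simple roots $\beta$ had $\<\mu_0,\beta^\vee\>=\pm1$, you would need to know that the different $\tau_{\beta,\lambda}$'s already act compatibly on $S_{\mu_0}$ --- and that is precisely the well-definedness you are trying to establish, so the argument becomes circular. Uniqueness of $\alpha_0$ is only proved in the Postlude, \emph{after} the present proposition.

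The paper takes a different route: it shows directly, by a combinatorial induction on word length, that whenever $\sigma_\gamma(\mu_0)=\mu_0$ the operator $\omega_\gamma$ restricts to $\pm\Id$ on $S_{\mu_0}$ (so conjugation by it fixes $\tau_{\alpha_0,\lambda}$ there regardless of how any torus element acts). The induction shortens $\gamma$ by deleting a letter $\alpha_i$ with $\<\mu_{i-1},\alpha_i^\vee\>=0$, or by commuting and then collapsing adjacent letters whose associated signs $k_i,k_{i+1}$ are opposite; the ambient $\pm$ absorbs the relation $\omega_\alpha^2=-1$ on $\lfloor\cG_\alpha,V\rfloor$. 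This is the missing idea. The remaining parts of your sketch (the field axioms via the local $\bSL_2$ analysis, and $\cG$-linearity by writing $\d_{\beta,\lambda}v=\tau_{\beta,\lambda}\omega_\beta v$ on the appropriate spot and invoking linearity of $\tau$ and $\omega$) match the paper's argument.
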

\begin{proof}

Here again we make a series of claims.


\begin{claim}\label{p:linear:c:welldefined}
Notation \ref{n:action} is well-defined.
\end{claim}
\begin{proofclaim}
By the definition of $\cl(\mu_0)$ and since the reflections $\sigma_\alpha$ ($\alpha \in \Phi_s$) generate the Weyl group, there is at least one sequence $\gamma \in \Phi_s^d$ with $\sigma_\gamma(\mu_0) = \nu$. The problem is that the actual operator $\omega_\gamma$ may depend on $\gamma$: the basic example is $\sigma_{\alpha_0}^2(\mu_0) = \mu_0$, whereas $\omega_{\alpha_0}^2$ acts on $S_{\mu_0}$ as $-1$.

It suffices to show the following: if $\gamma, \gamma'$ are sequences such that $\sigma_\gamma(\mu_0) = \sigma_{\gamma'}(\mu_0)$, then there is $\varepsilon \in \{\pm 1\}$ with $(\omega_\gamma)_{|S_{\mu_0}} = \varepsilon (\omega_{\gamma'})_{|S_{\mu_0}}$.
Notice by inspection that $(\omega_\alpha^{-1})_{S_\mu}$ equals $\pm (\omega_\alpha)_{S_\mu}$ (the sign is given by $(-1)^{\<\mu, \alpha^\vee\>}$ as one can see), so we may replace any $\omega_\alpha$ by its inverse in a product of type $\omega_\gamma$.

So applying $\omega_{\gamma'}^{-1}$ it therefore suffices to prove: if $\sigma_\gamma(\mu_0) = \mu_0$ then $\omega_\gamma$ acts as $\pm 1$ on $S_{\mu_0}$.
(We may have missed something as this looks decently obvious but we failed to convey this feeling and have no better reason to offer the reader than the following argument.)

Write $\gamma = (\alpha_1, \dots, \alpha_d)$; for $i \in \{1, \dots, d\}$ let $\mu_i = \sigma_{\alpha_i}(\mu_{i-1})$. We suppose $\mu_d = \mu_0$ and shall prove that there is $\varepsilon \in \{\pm 1\}$ such that for any $v \in S_{\mu_0}$, one has $\omega_\gamma v = \varepsilon v$ (be careful that $\varepsilon$ will depend on both $\gamma$ and $\mu_0$). The proof will be by induction on $d$.
For convenience let $k_i = \<\mu_{i-1}, \alpha_i^\vee\> \in \{-1, 0, 1\}$; by definition, $\mu_i = \mu_{i-1} - k_i \alpha_i$.

First suppose that there is $i \in \{1, \dots, d\}$ with $k_i = 0$. Let $\gamma' = (\alpha_1, \dots, \widehat{\alpha_i}, \dots, \alpha_d)$ (i.e., remove $\alpha_i$ from the sequence). By assumption, $\mu_i = \mu_{i-1}$; hence $\sigma_{\gamma'}(\mu_0) = \sigma_\gamma(\mu_0) = \mu_0$. Also recall that $k_i = \<\mu_{i-1}, \alpha_i^\vee\> = 0$ implies that $S_{\mu_{i-1}} \leq \Triv_V(\cG_{\alpha_i})$: hence $\omega_{\alpha_i}$ fixes $S_{\mu_{i-1}}$ pointwise. So $\omega_\gamma v = \omega_{\gamma'} v$ and we may apply induction to conclude.

Now suppose there is $i \in \{1, \dots, d-1\}$ with $k_{i+1} = - k_i$. The left-hand side is:
\[k_{i+1} = \<\mu_i, \alpha_{i+1}^\vee\> = \<\sigma_{\alpha_i}(\mu_{i-1}), \alpha_{i+1}^\vee\> = \<\mu_{i-1}, \alpha_{i+1}^\vee\> - k_i \<\alpha_i, \alpha_{i+1}^\vee\>\]
Hence $\<\mu_{i-1}, \alpha_{i+1}^\vee\> = k_i (\<\alpha_i, \alpha_{i+1}^\vee\> - 1) \in \{-1, 0, 1\}$.
\begin{itemize}
\item
If $\<\alpha_i, \alpha_{i+1}^\vee\> = 2$ then $\alpha_{i+1} = \alpha_i$. Let $\gamma' = (\alpha_1, \dots, \widehat{\alpha_i}, \widehat{\alpha_{i+1}}, \dots, \alpha_d)$; clearly $\sigma_{\gamma'}(\mu_0) = \mu_0$ and $\omega_\gamma v = - \omega_{\gamma'} v$; apply induction.
\item
Otherwise $\<\alpha_i, \alpha_{i+1}^\vee\> \leq 0$ and this forces $\<\alpha_i, \alpha_{i+1}^\vee\> = 0$: the roots are not adjacent, implying that $\sigma_{\alpha_i}$ and $\sigma_{\alpha_{i+1}}$ on the one hand, $\omega_{\alpha_i}$ and $\omega_{\alpha_{i+1}}$ on the other hand, commute. So swapping these roots in the sequence, $\gamma' = (\alpha_1, \dots, \alpha_{i+1}, \alpha_i, \dots, \alpha_d)$ enjoys both $\sigma_{\gamma'}(\mu_0) = \sigma_\gamma(\mu_0)$ and $\omega_{\gamma'} v = \omega_\gamma v$. (The careful reader will note that $\mu_i$ changes, but $\mu_i$ is a mere gadget in our argument.)
\end{itemize}

Inductively applying the previous, we may suppose that there is $\ell \leq d$ with $k_1 = \dots = k_\ell = - k_{\ell +1} = \dots = - k_d$. Now $\mu_0 = \sigma_\gamma(\mu_0) = \mu_0 + k_1 (\alpha_1 + \dots + \alpha_\ell - \alpha_{\ell+1} \dots - \alpha_d)$. Since simple roots are linearly independent in the vector space they span, there is $i \leq \ell$ maximal with $\alpha_i = \alpha_{\ell+1}$. But like above, we see that $\alpha_{\ell+1}$ is never adjacent to $\alpha_j$ for $j \in \{i+1, \dots, \ell\}$. In particular $\gamma' = (\alpha_1, \dots, \alpha_i, \alpha_{l+1}, \alpha_{i+1}, \dots, \alpha_\ell, \alpha_{\ell + 2}, \dots, \alpha_d)$ (obtained from $\gamma$ by moving $\alpha_{\ell+1}$ right after $\alpha_i$) enjoys both $\sigma_{\gamma'}(\mu_0) = \mu_0$ and $\omega_{\gamma'} v = \omega_\gamma v$.
Now $\gamma'$ bears a redundancy; conclude by induction.
\end{proofclaim}


\begin{claim}
Notation \ref{n:action} defines a field action.
\end{claim}
\begin{proofclaim}
We argue piecewise; it clearly suffices to prove the claim in the action on $S_{\mu_0}$. Additivity in $v$ is obvious, so we now fix $v \in S_{\mu_0}$. Since $\alpha_0$ is the only root involved in the argument, we shall conveniently let $\alpha = \alpha_0$.

If $\cG = \fg$, then additivity in $\lambda$ is obvious since $\tau_{\alpha, \lambda} = h_{\alpha, \lambda}$; we turn to multiplicativity. Observe how, since $v \in S_{\mu_0} \leq V_{(\mu_0, \alpha^\vee)} = \fu_\alpha \cdot V$:
\[\lambda \cdot v = h_{\alpha, \lambda} v = x_{-\alpha} x_{\alpha, \lambda} v - x_{\alpha, \lambda} x_{-\alpha} v = - x_{\alpha, \lambda} x_{-\alpha} v = - x_\alpha x_{- \alpha, \lambda} v\]
so that, using quadraticity of $\fu_{\alpha}$ and looking if necessary at Remark \ref{r:y}:
\begin{align*}
\lambda (\lambda' v) & = x_{\alpha} x_{-\alpha, \lambda} x_{\alpha, \lambda'} x_{-\alpha} v\\
& = x_{\alpha} h_{\alpha, \lambda \lambda'} x_{-\alpha} v\\
& = - 2 x_{\alpha, \lambda \lambda'} x_{-\alpha} v + h_{\alpha, \lambda \lambda'} x_{\alpha} x_{-\alpha} v\\
& = 2 h_{\alpha, \lambda \lambda'} v - h_{\alpha, \lambda \lambda'} v\\
&= (\lambda \lambda') v
\end{align*}
as desired.

If $\cG = G$, then multiplicativity in $\lambda$ is now obvious since $\tau_{\alpha, \lambda} = t_{\alpha, \lambda}$; we turn to additivity. But remember from Proposition \ref{p:localanalysis} that $\d_{\alpha, \lambda} w_{\alpha} v = - t_{\alpha, \lambda} v$, so that, using quadraticity of $U_{\alpha}$:
\begin{align*}
(\lambda + \lambda') v & = - \d_{\alpha, \lambda + \lambda'} w_{\alpha} v\\
& = - (u_{\alpha, \lambda + \lambda'} - 1) w_{\alpha} v\\
& = - (u_{\alpha, \lambda} u_{\alpha, \lambda'} - 1) w_{\alpha} v\\
& = - (\d_{\alpha, \lambda} + \d_{\alpha, \lambda'} + \d_{\alpha, \lambda} \d_{\alpha, \lambda'}) w_{\alpha} v\\
& = - \d_{\alpha, \lambda} w_{\alpha} v - \d_{\alpha, \lambda'} w_{\alpha} v\\
& = \lambda v + \lambda' v
\end{align*}
as desired.
\end{proofclaim}


\begin{claim}
The action of $\cG$ on the $\K$-vector space $V$ is linear.
\end{claim}
\begin{proofclaim}
Remark that all operators $\omega_\beta$ for $\beta \in \Phi_s$ are linear by construction (and well-definedness of the action).

It could be tempting to prove linearity of one root $\bSL_2$-substructure, say $\cG_{\alpha_0}$, and of the Weyl group. The problem is that properly speaking, the Weyl group (the group of automorphisms \emph{of the root system} generated by $\{\sigma_\beta: \beta \in \Phi_s\}$) does not act on $V$. Of course we just observed that $\omega_\beta$ does act linearly; the problem remains to see why the image of $\cG$ in $\End(V)$ is generated by $\cG_{\alpha_0}$ and the operators $\{\omega_\beta: \beta \in \Phi_s\}$. This is obvious in the case of the group but not entirely so in the case of the Lie ring. So we take a side approach.

We shall first prove that all operators $\tau_{\beta, \lambda}$ for $(\beta, \lambda)\in \Phi_s \times \K^\times$ are linear.
Notice that since $h_{-\beta, \lambda} = - h_{\beta, \lambda}$ and $t_{-\beta, \lambda} = t_{\beta, \lambda}^{-1}$ (see our realisation), this will actually imply linearity of $\tau_{\pm \beta, \lambda}$.

In the case of the group $\cG = G$, assuming $\nu = \sigma_\gamma(\mu_0)$ and letting $v \in S_\nu$:
\begin{align*}
\tau_{\beta, \lambda} (\lambda' \cdot v) & = \tau_{\beta, \lambda} \omega_\gamma \tau_{\alpha, \lambda'}\omega_\gamma^{-1} v\\
& = \omega_\gamma \tau_{\alpha, \lambda'}\omega_\gamma^{-1} \tau_{\beta, \lambda} v\\
& = \lambda' \cdot (\tau_{\beta, \lambda} v)
\end{align*}
since $\omega_\gamma \tau_{\alpha, \lambda'}\omega_\gamma^{-1} \in \cT \leq C_G(\tau_{\beta, \lambda})$.

In the case of the Lie ring $\cG = \fg$ remember from Proposition \ref{p:localanalysis} that in $\End(V)$ the operators $\omega_\alpha$ (and therefore operators $\omega_\gamma$ as well) normalise the image of the abelian ring $\ft$. So we can carry exactly the same argument.
Hence $\cT$ acts linearly in any case.

We can now deduce that all elements $\d_{\pm \beta, \lambda}$ for $(\beta, \lambda) \in \Phi_s \times \K_+$ are linear. This will suffice for the linearity of $\cG$.
Fix $\nu \in \cl(\mu_0)$ and $v \in S_\nu$; also take $\lambda' \in \K$.
We show that $\d_{\pm \beta, \lambda} (\lambda' \cdot v) = \lambda' \cdot \d_{\pm \beta, \lambda} v$.
If $\<\nu, \beta^\vee\> = 0$ there is nothing to prove. By symmetry we may assume $\<\nu, \beta^\vee\> = -1$. Then $\d_{-\beta, \lambda}$ acts as the zero map on $S_\nu$ and therefore is linear. Now $\omega_\beta S_\nu = S_{\sigma_\beta(\nu)} \leq [U_\beta, V]$ so for any $v \in S_\nu$ one has:
\[\d_{\beta, \lambda} v = - \d_{\beta, \lambda} \omega_\beta^2 v = \tau_{\beta, \lambda} \omega_\beta v\]
In particular,
\[\d_{\beta, \lambda} (\lambda' v) =
\tau_{\beta, \lambda} \omega_\beta (\lambda' v) =
\lambda' \cdot \tau_{\beta, \lambda} \omega_\beta v =
\lambda' \cdot \d_{\beta, \lambda} v
\]
which proves linearity of $\d_{\beta, \lambda}$.
\end{proofclaim}

This completes the proof of Proposition \ref{p:linear}.
\end{proof}

\begin{remark}
The linear structure may seem to depend on both $\mu_0$ and $\alpha_0$. It actually depends on neither. This can be seen as a consequence of the postlude.
\end{remark}

\subsection{Postlude}

So far we have turned every $V_{\cl(\mu_0)}$ with $\mu_0 \neq 0$ into a $\K[\cG]$-module, which could easily be proved a direct sum of irreducible $\K[\cG]$-modules where every root acts quadratically. In order to conclude to identification with a minuscule module it suffices to determine the weights involved. This we do without invoking \cite[Chap. VIII, \S7.3]{BLie78}, as we promised that the present work would be self-contained.

\begin{proposition}
$\cl(\mu_0)$ is one of the orbits obtained from a geometrically minuscule module.
\end{proposition}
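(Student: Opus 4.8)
We must show that the Weyl orbit $\cl(\mu_0)$ of a mass $\mu_0 \neq 0$ is exactly the set of weights of a minuscule representation. By Proposition~\ref{p:transitions} we already know the Weyl group acts on $M$, hence on $\cl(\mu_0)$; and the defining property of a mass forces, for every $\mu \in \cl(\mu_0)$ and every \emph{simple} root $\alpha$, that $\<\mu, \alpha^\vee\> \in \{-1,0,1\}$. The content of the statement is that this local, simple-root condition already pins $\cl(\mu_0)$ down to a geometrically minuscule orbit — i.e. that $\<\mu, \alpha^\vee\> \in \{-1,0,1\}$ for \emph{all} roots $\alpha$, not merely the simple ones.

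**The plan.** The cleanest route is to appeal to the transitivity built into Proposition~\ref{p:transitions} and show $\<\mu, \alpha^\vee\> \in \{-1,0,1\}$ for every $\alpha \in \Phi$ and every $\mu \in \cl(\mu_0)$. First I would recall that every positive root $\alpha$ is $W$-conjugate to a simple root: there is $w$ in the Weyl group with $w(\alpha) = \beta \in \Phi_s$. Writing $w$ as a product of simple reflections $\sigma_{\alpha_d}\cdots\sigma_{\alpha_1}$ and using Proposition~\ref{p:transitions} repeatedly, $\omega_\gamma$ carries $S_\mu$ bijectively onto $S_{w(\mu)}$, so $w(\mu)$ is again a mass in $\cl(\mu_0)$. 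Then the pairing transforms as $\<\mu, \alpha^\vee\> = \<w(\mu), w(\alpha)^\vee\> = \<w(\mu), \beta^\vee\>$, and since $w(\mu) \in \cl(\mu_0) \subseteq M$ and $\beta$ is simple, the mass condition gives $\<w(\mu), \beta^\vee\> \in \{-1,0,1\}$. Hence $\<\mu, \alpha^\vee\> \in \{-1,0,1\}$ for all $\alpha \in \Phi$: the orbit is geometrically minuscule by the criterion recalled just before the proof from \cite[Chap.~VIII, \S7.3, Proposition~6]{BLie78}.

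**Identifying the orbit.** Once one knows that $\<\mu, \alpha^\vee\> \in \{-1,0,1\}$ for all roots and all $\mu$ in the orbit, I would invoke the classification of such ``minuscule'' weight orbits: a nonzero $W$-orbit on which every $\<\mu, \alpha^\vee\>$ lies in $\{-1,0,1\}$ consists of the weights conjugate to a minuscule fundamental weight, and these orbits are in bijection with the minuscule weights listed in~\S\ref{s:minuscule}. Since the author promised a self-contained treatment avoiding \cite[Chap.~VIII, \S7.3]{BLie78}, the honest version would instead verify directly that the set of dominant masses in $\cl(\mu_0)$ is a single minuscule fundamental weight: a dominant $\mu$ with all $\<\mu, \alpha_i^\vee\> \in \{0,1\}$ and at least one equal to $1$ is, by linear independence of the fundamental weights, a fundamental weight $\omega_i$, and the minuscule condition on \emph{all} roots selects exactly the minuscule node. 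The orbit $\cl(\mu_0)$ is then the $W$-orbit of that fundamental weight, which is by definition the weight set of the corresponding geometrically minuscule module.

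**Main obstacle.** The delicate point is the step from simple roots to all roots, because the relevant conjugacy must be lifted compatibly to $V$ — exactly the caveat flagged in~\S\ref{s:remarks} for the Lie ring, where a naive Weyl action need not lift. The safeguard is that Proposition~\ref{p:transitions} was proved for all $(\alpha,\mu) \in \Phi_s \times M$ using the hypothesis that \emph{every} root acts quadratically (this is stressed in the proof of that proposition, ``bear in mind the assumption was on all roots''), so the transported mass $w(\mu)$ genuinely satisfies the mass condition against every simple root. The only care needed is bookkeeping: the operator $\omega_\gamma$ is defined only up to sign on each spot, but signs are irrelevant to which spots are nonzero, so they do not affect membership in $M$ and the argument goes through uniformly for $\cG = G$ and $\cG = \fg$.
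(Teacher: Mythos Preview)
Your core idea---conjugate an arbitrary root $\alpha$ to a simple root $\beta$ by some $w \in W$, then use Proposition~\ref{p:transitions} to say $w(\mu) \in M$, whence $\<\mu,\alpha^\vee\> = \<w(\mu),\beta^\vee\> \in \{-1,0,1\}$---is correct and is genuinely different from the paper's approach. The paper never establishes the all-roots inequality; instead it works entirely with the simple-root data and the single combinatorial consequence of Proposition~\ref{p:transitions} that two adjacent simple roots cannot both pair to $1$ with a mass. From this it runs an explicit Dynkin-diagram case analysis (types $A$--$G$), pushing configurations to contradiction by successive simple reflections, to show the dominant representative of $\cl(\mu_0)$ is one of the listed minuscule fundamental weights. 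Your route is shorter and conceptually cleaner; the paper's route is longer but keeps the promise of not invoking \cite[Chap.~VIII, \S7.3]{BLie78}.

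There is, however, a real gap in your ``honest version''. The sentence ``a dominant $\mu$ with all $\<\mu,\alpha_i^\vee\>\in\{0,1\}$ and at least one equal to $1$ is, by linear independence of the fundamental weights, a fundamental weight $\omega_i$'' is false as written: $\omega_1+\omega_2$ satisfies your hypothesis and is not fundamental. What actually forces uniqueness (and simultaneously selects the minuscule node) is the all-roots condition you just proved, applied to the highest coroot $\theta^\vee = \sum_i c_i\,\alpha_i^\vee$ with all $c_i\ge 1$: for dominant $\mu$ one gets $0 \le \sum_i c_i\,\<\mu,\alpha_i^\vee\> = \<\mu,\theta^\vee\> \le 1$, so at most one $\<\mu,\alpha_i^\vee\>$ is nonzero and for that $i$ one has $c_i=1$, which is precisely the condition that $\omega_i$ be minuscule. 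With this one-line repair your argument is complete and self-contained; without it you are either appealing to Bourbaki (which the paper explicitly avoids here) or leaving the identification unjustified.
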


\begin{proof}

It suffices to show that $\mu_0$ is a minuscule weight.

\begin{claim}
We may suppose that for all $\beta \in \Phi_s$, $\<\mu_0, \beta^\vee\> \geq 0$.
\end{claim}
\begin{proofclaim}
This is because the topological closure of the positive chamber is a fundamental domain for the action of $W$ on $E$ \cite[Chap. V, \S3.3, Th\'{e}or\`{e}me 2]{BLie456}.
\end{proofclaim}

Remember that a consequence of Proposition \ref{p:transitions} is that if $\mu \in M$ and $\alpha, \beta \in \Phi_s$ are adjacent in the Dynkin diagram, one cannot have $\<\mu, \alpha^\vee\> = \<\mu, \beta^\vee\> = 1$. This will be used repeatedly in the argument.

\begin{claim}
There is exactly one $\alpha \in \Phi_s$ with $\<\mu_0, \alpha^\vee\> = 1$.
\end{claim}
\begin{proofclaim}
Suppose that there are a segment $\Sigma$ of the Dynkin diagram and a mass $\mu$ with both $\forall \gamma \in \Sigma$, $\<\mu, \gamma^\vee\> \geq 0$ and two distinct $\alpha, \beta \in \Sigma$ with $\<\mu, \alpha^\vee\> = \<\mu, \beta^\vee\> = 1$. We may suppose the distance between $\alpha$ and $\beta$ to be minimal.

Notice that by Proposition \ref{p:transitions}, $\alpha$ and $\beta$ are not adjacent. Let $\gamma$ be the neighbour of $\alpha$ in $[\alpha \beta]$; by assumption, $\<\mu, \gamma^\vee\> \geq 0$; by Proposition \ref{p:transitions} again one cannot have $\<\mu, \gamma^\vee\> = 1$, so $\<\mu, \gamma^\vee\> = 0$.

Let $\nu = \sigma_\alpha(\mu)$; clearly $\nu$ takes non-negative values on $[\gamma \beta]$ and $\<\nu, \gamma^\vee\> = \<\nu, \beta^\vee\> = 1$, against minimality of $[\alpha \beta]$.
\end{proofclaim}

Let $\alpha_0$ be the unique simple root with $\<\mu_0, \alpha_0^\vee\> = 1$.
We shall draw Dynkin diagrams and label each simple root $\alpha$ with the value $\<\mu, \alpha^\vee\>$.

In case $\bG = A_n$, there is nothing to prove; let us first handle types $B_n$ and $C_n$.

\begin{claim}\label{p:lattice:c:doublebond}
If the Dynkin diagram contains a double bond, then $\alpha_0$ is the extremal short root.
\end{claim}
\begin{proofclaim}
Notice that by Proposition \ref{p:transitions}, the following is inconsistent for any mass $\mu$:
\[\begin{tikzpicture}
\draw (0,0) circle (.3);
\draw[fill=black] (2,0) circle (.3);
\node[anchor=south] at (0,-1) {$0$};
\node[anchor=south] at (2,-1) {$1$};
\draw[thick] (.3, .1) -- +(1.4,0);
\draw[thick] (.3, -.1) -- +(1.4,0);
\draw (1.1, .3) -- (.9, 0) -- (1.1, -.3);
\draw[dotted] (-1, 0) -- (-.3, 0);
\draw[dotted] (2.3, 0) -- (3, 0);
\end{tikzpicture}
\]
Therefore, inductively reflecting along the coroot with value $1$, the following is inconsistent as well:
\[\begin{tikzpicture}
\draw (0,0) circle (.3);
\draw[fill = black] (2,0) circle (.3);
\draw[fill = black] (6,0) circle (.3);
\node[anchor=south] at (0,-1) {$0$};
\node[anchor=south] at (2,-1) {$0$};
\node[anchor=south] at (4,-1) {$0$};
\node[anchor=south] at (6,-1) {$1$};
\draw[thick] (.3, .1) -- (1.7,.1);
\draw[thick] (.3, -.1) -- (1.7,-.1);
\draw (1.1, .3) -- (.9, 0) -- (1.1, -.3);
\draw[dotted] (2.3, 0) to (5.7,0) ;
\draw[dotted] (-1, 0) -- (-.3, 0);
\draw[dotted] (6.3, 0) -- (7, 0);
\end{tikzpicture}
\]

On the other hand, reflecting in the middle then in the left root, the following is inconsistent too:
\[\begin{tikzpicture}
\draw (-2,0) circle (.3);
\draw (0,0) circle (.3);
\draw[fill=black] (2,0) circle (.3);
\node[anchor=south] at (-2,-1) {$0$};
\node[anchor=south] at (0,-1) {$1$};
\node[anchor=south] at (2,-1) {$0$};
\draw[thick] (.3, .1) -- +(1.4,0);
\draw[thick] (.3, -.1) -- +(1.4,0);
\draw (1.1, .3) -- (.9, 0) -- (1.1, -.3);
\draw[dotted] (-3, 0) -- (-2.3, 0);
\draw (-1.7, 0) -- (-0.3, 0);
\draw[dotted] (2.3, 0) -- (3, 0);
\end{tikzpicture}
\]
Inductively reflecting in the next-to-left then in the left coroot, so is the following:
\[\begin{tikzpicture}
\draw (-6, 0) circle (.3);
\node[anchor=south] at (-6,-1) {$0$};
\draw (-4, 0) circle (.3);
\node[anchor=south] at (-4,-1) {$1$};
\draw (0,0) circle (.3);
\draw[fill=black] (2,0) circle (.3);
\node[anchor=south] at (-2,-1) {$0$};
\node[anchor=south] at (0,-1) {$0$};
\node[anchor=south] at (2,-1) {$0$};
\draw[thick] (-5.7, 0) -- +(1.4,0);
\draw[dotted] (-3.7, 0) -- +(3.4,0);
\draw[thick] (.3, .1) -- +(1.4,0);
\draw[thick] (.3, -.1) -- +(1.4,0);
\draw (1.1, .3) -- (.9, 0) -- (1.1, -.3);
\draw[dotted] (-7, 0) -- (-6.3, 0);
\draw[dotted] (2.3, 0) -- (3, 0);
\end{tikzpicture}
\qedhere
\]
\end{proofclaim}

In particular this covers the cases of $B_n$ and $C_n$.
We move to types $D_n$ and $E_n$.

\begin{claim}\label{p:lattice:c:DE}
If $\bG = D_n$ or $E_n$ then $\alpha_0$ is extremal.
\end{claim}
\begin{proofclaim}
The following is easily seen inconsistent:
\[\begin{tikzpicture}
\draw (-2, 0) circle (.3);
\node[anchor=south] at (-2,-1) {$0$};
\draw (0, 0) circle (.3);
\node[anchor=south] at (0,-1) {$1$};
\draw (2, 0) circle (.3);
\node[anchor=south] at (2,-1) {$0$};
\draw (0, 2) circle (.3);
\node[anchor=east] at (1,2) {$0$};
\draw[thick] (-1.7, 0) -- (-0.3,0);
\draw[thick] (0.3, 0) -- (1.7,0);
\draw[thick] (0, 0.3) -- (0,1.7);
\draw[dotted] (-4, 0) -- (-2.3,0);
\draw[dotted] (2.3, 0) -- (4,0);
\end{tikzpicture}
\]
Therefore so is the following:
\[\begin{tikzpicture}
\draw (-4, 0) circle (.3);
\node[anchor=south] at (-4,-1) {$0$};
\draw (-2, 0) circle (.3);
\node[anchor=south] at (-2,-1) {$1$};
\draw (0, 0) circle (.3);
\node[anchor=south] at (0,-1) {$0$};
\draw (2, 0) circle (.3);
\node[anchor=south] at (2,-1) {$0$};
\draw (0, 2) circle (.3);
\node[anchor=east] at (1,2) {$0$};
\draw[thick] (-3.7, 0) -- (-2.3,0);
\draw[thick] (-1.7, 0) -- (-0.3,0);
\draw[thick] (0.3, 0) -- (1.7,0);
\draw[thick] (0, 0.3) -- (0,1.7);
\draw[dotted] (-6, 0) -- (-4.3,0);
\draw[dotted] (2.3, 0) -- (4,0);
\end{tikzpicture}
\]
By induction so is the following:
\[\begin{tikzpicture}
\draw (-6, 0) circle (.3);
\node[anchor=south] at (-6,-1) {$0$};
\draw (-4, 0) circle (.3);
\node[anchor=south] at (-4,-1) {$1$};
\node[anchor=south] at (-2,-1) {$0$};
\draw (0, 0) circle (.3);
\node[anchor=south] at (0,-1) {$0$};
\draw (2, 0) circle (.3);
\node[anchor=south] at (2,-1) {$0$};
\draw (0, 2) circle (.3);
\node[anchor=east] at (1,2) {$0$};
\draw[dotted] (-8, 0) -- (-6.3,0);
\draw[dotted] (-3.7, 0) -- (-0.3,0);
\draw[thick] (0.3, 0) -- (1.7,0);
\draw[thick] (0, 0.3) -- (0,1.7);
\draw[thick] (-5.7, 0) -- (-4.3,0);
\draw[dotted] (2.3, 0) -- (4,0);
\end{tikzpicture}
\qedhere\]
\end{proofclaim}

This covers case $D_n$. We are not done with case $E_n$.

\begin{claim}
If $\bG = E_n$ then $n = 6$ or $7$ and $\alpha_0$ is one of the roots (resp. the root) further from the arity $3$ root.
\end{claim}
\begin{proofclaim}
We know from Claim \ref{p:lattice:c:DE} that $\alpha_0$ is extremal but there remains a number of configurations to kill.

First, we shall check the following is inconsistent:
\[\begin{tikzpicture}
\draw[dotted] (4.3, 0) -- (6,0);
\foreach \k in {0,1,2,3,4}
  \draw (-4+2*\k, 0) circle (.3);
\foreach \k in {0,1,2,3,4}
 \node[anchor=south] at (-4+2*\k,-1) {$0$};
\foreach \k in {0,1,2,3}
\draw[thick] (-4+2*\k+.3, 0) -- +(1.4,0);
\draw (0,2) circle (.3);
 \node[anchor=east] at (1,2) {$1$};
\draw[thick] (0, .3) -- +(0,1.4);
  \end{tikzpicture}
\]
We see this by bringing the diagram into the following state:
\[\begin{tikzpicture}
\draw[dotted] (4.3, 0) -- (6,0);
\foreach \k in {0,1,2,3,4}
  \draw (-4+2*\k, 0) circle (.3);
\node[anchor=south] at (-4,-1) {$0$};
\node[anchor=south] at (-2,-1) {$1$};
\node[anchor=south] at (0,-1) {$-1$};
\node[anchor=south] at (2,-1) {$1$};
\node[anchor=south] at (4,-1) {$0$};
\foreach \k in {0,1,2,3}
\draw[thick] (-3.7+2*\k, 0) -- +(1.4,0);
\draw (0,2) circle (.3);
 \node[anchor=east] at (1,2) {$0$};
\draw[thick] (0, .3) -- +(0,1.4);
  \end{tikzpicture}
\]
Then into:
\[\begin{tikzpicture}
\draw[dotted] (4.3, 0) -- (6,0);
\foreach \k in {0,1,2,3,4}
  \draw (-4+2*\k, 0) circle (.3);
\node[anchor=south] at (-4,-1) {$-1$};
\node[anchor=south] at (-2,-1) {$0$};
\node[anchor=south] at (0,-1) {$1$};
\node[anchor=south] at (2,-1) {$0$};
\node[anchor=south] at (4,-1) {$-1$};
\foreach \k in {0,1,2,3}
\draw[thick] (-4+2*\k+.3, 0) -- +(1.4,0);
\draw (0,2) circle (.3);
 \node[anchor=east] at (1,2) {$0$};
\draw[thick] (0, .3) -- +(0,1.4);
  \end{tikzpicture}
\]
an inconsistent configuration as we know from the proof of Claim \ref{p:lattice:c:DE}.

The counting reader will find three more configurations to kill: one for $E_7$, two for $E_8$. We can remove two simultaneously.
Perhaps we ought to make our notations more compact. Consider the diagram:
\[\begin{tikzpicture}
\foreach \k in {1,2,3,4,5,6}
  \draw (2*\k, 0) circle (.3);
\draw[dotted] (14, 0) circle (.3);
\foreach \k in {1,2,3,4,5,6}
 \node[anchor=south] at (2*\k,-1) {$\beta_{\k}$};
 \node[anchor=south] at (14,-1) {$(\beta_7)$};
\foreach \k in {1,2,3,4,5}
\draw[thick] (2*\k+.3, 0) -- +(1.4,0);
\draw[dotted] (12.3, 0) -- +(1.4,0);
\draw (6,2) circle (.3);
 \node[anchor=east] at (7,2) {$\gamma$};
\draw[thick] (6, .3) -- +(0,1.4);
  \end{tikzpicture}
\]
We tabulate consecutive masses until we reach inconsistency (an empty cell is an unchanged value):
\[\begin{array}{c|ccccccc}
\gamma & \beta_1 & \beta_2 & \beta_3 & \beta_4 & \beta_5 & \beta_6 & (\beta_7)\\\hline
0 & 1 & 0 & 0 & 0 & 0 & 0\\
 & -1 & 1\\
& 0 & -1 & 1\\
1 & & 0 & -1 & 1\\
& & & 0 & -1 & 1\\
& & & & 0 & -1 & 1\\
-1 & & & 1\\
0 & & 1 & -1 & 1\\
& & & 0 & -1 & 0\\
& 1 & -1 & 1\\
1 & & 0 & -1 & 0\\
- 1 & & & 0\\
\end{array}
\]
In the final state, the value at $\beta_i^\vee$ for $i \in\{1, \dots, 6\}$ is non-negative, and positive at both $\beta_1^\vee$ and $\beta_6^\vee$: an inconsistency.

So there remains only one $E_8$ configuration, which we handle as follows.
\[\begin{array}{c|ccccccc}
  \gamma & \beta_1 & \beta_2 & \beta_3 & \beta_4 & \beta_5 & \beta_6 & \beta_7\\\hline
0 & 0 & 0 & 0 & 0 & 0 & 0 & 1\\
& & & & & & 1 & -1\\
& & & & & 1 & -1 & 0\\
& & & & 1 & -1 & 0\\
& & & 1 & -1 & 0\\
1 & & 1 & -1 & 0 & & & \\
-1 & & & 0\\
& 1 & -1 & 1\\
 & -1 & 0\\
0 & & 1 & -1 & 1\\
& 0 & -1 & 0\\
& & & 1 & -1 & 1\\
1 & & 0 & -1 & 0\\
-1 & & & 0\\
& & & & 1 & -1 & 1\\
& & & 1 & -1 & 0\\
0 & & 1 & -1 & 0\\
& 1 & -1 & 0\\
& -1 & 0\\
& & & & & 1 & -1 & 1\\
& & & & 1 & -1 & 0\\
& & & 1 & -1 & 0\\
1 & & 1 & -1 & 0\\
-1 & & & 0
\end{array}
\]
and $[\beta_2 \beta_7]$ is an inconsistent configuration.
\end{proofclaim}

\begin{claim}
For $\bG = F_4$ the configuration is inconsistent.
\end{claim}
\begin{proofclaim}
By Claim \ref{p:lattice:c:doublebond} only the following need be considered:
\[\begin{tikzpicture}
\draw (-2,0) circle (.3);
\draw (0,0) circle (.3);
\draw[fill = black] (2,0) circle (.3);
\draw[fill = black] (4,0) circle (.3);
\node[anchor=south] at (-2,-1) {$1$};
\node[anchor=south] at (0,-1) {$0$};
\node[anchor=south] at (2,-1) {$0$};
\node[anchor=south] at (4,-1) {$0$};
\draw[thick] (-1.7, 0) -- (-.3,0);
\draw[thick] (.3, .1) -- (1.7,.1);
\draw[thick] (.3, -.1) -- (1.7,-.1);
\draw (1.1, .3) -- (.9, 0) -- (1.1, -.3);
\draw[thick] (2.3, 0) -- (3.7,0);
\end{tikzpicture}
\]
We leave it to the reader to push the configuration to inconsistency.
\end{proofclaim}

\begin{claim}
For $\bG = G_2$ the configuration is inconsistent.
\end{claim}
\begin{proofclaim}
By Proposition \ref{p:transitions}, $\alpha_0$ cannot be the long simple root (call it $\beta$) and is therefore the short root; reflecting in $\alpha_0^\vee$ we find $\<\sigma_{\alpha_0}(\mu_0), \beta^\vee\> = - \<\alpha_0, \beta^\vee\> = 1$, and then $\<\sigma_\beta \sigma_{\alpha_0}(\mu_0), \alpha_0^\vee\> = - 1 + 3 \notin\{-1, 0, 1\}$: a contradiction.
\end{proofclaim}

This shows that $\mu_0$ is one of the minuscule weights described in \cite[Chap. VIII, end of \S7.3]{BLie78}.
\end{proof}

This immediately gives an isomorphism of $\K[\cG]$-modules: so $V_{\cl(\mu)}$ is a sum of minuscule representations, in the geometric sense of the term.

\medskip\hrule\medskip
Future variations will see our return to model theory: we shall untensor a cubic $\SL_2(\K)$-module in the finite Morley rank category.

\bibliographystyle{plain}
\bibliography{../English/Variationen}

\end{document}